\date{}
\newtheorem{Theorem}{Theorem}[section]
\newtheorem{Proposition}[Theorem]{Proposition}
\newtheorem{Corollary}[Theorem]{Corollary}
\theoremstyle{definition}
\theoremstyle{remark}
\newtheorem{Remark}[Theorem]{Remark}
\numberwithin{equation}{section}
\title{Extensions of classical hypergeometric identities of Bailey and Whipple}
\author{Ilia D. Mishev
\footnote{Department of Mathematics, University of Colorado Boulder,
Campus Box 395, Boulder, CO 80309-0395, U.S.A.
E-mail address: ilia.mishev@colorado.edu}}
\begin{document}

\maketitle

\begin{center}
Dedicated to the memory of Professor Plamen Djakov.
\end{center}

\begin{abstract}
We obtain extensions of classical hypergeometric identities of Bailey
and Whipple that transform nearly-poised and very-well-poised series to
Saalsch\"utzian series, Saalsch\"utzian series to Saalsch\"utzian series, and
very-well-poised and nearly-poised series to very-well-poised series.
We employ a method in which summations and transformations of lower-order
series are used to obtain transformations of higher-order series.
By taking limits, we also obtain extensions of two classical
quadratic transformations of Whipple and Bailey.
Furthermore, we show how a number of other well-known results regarding
hypergeometric series follow as special
cases of our results.
\end{abstract}

\section{Introduction}

Identities among hypergeometric series, both terminating and nonterminating, 
have been the subject of
extensive research. In the 1920s, 
Bailey and Whipple 
(see \cite{Bailey3, Whipple1,Whipple2, Whipple3, Whipple4, Whipple5})
found a 
number of identities relating various terminating hypergeometric series
that are listed in Chapters 4 and 7 of Bailey's tract \cite{Ba}. In this paper,
we generalize these terminating hypergeometric identities found by 
Bailey and Whipple and extend them to higher-order hypergeometric series.

We consider classical identities involving terminating very-well-poised, nearly-poised 
and Saalsch\"utzian series (see Section 2 for the relevant definitions).
The highest order classical transformation formula between two terminating very-well-poised
series was found by Bailey in \cite{Bailey3} and involves two terminating very-well-poised
${}_9F_8(1)$ series (see \cite[Eq.\ 4.3.7]{Ba}). 
This formula was recently extended to the ${}_{11}F_{10}$ level
by Srivastava, Vyas and Fatawat \cite[Theorem 3.4]{SVF} by adding two new numerator and
denominator pairs of parameters that differ by one each. In this paper, we further extend
Srivastava, Vyas and Fatawat's result to a relation involving two terminating
very-well-poised ${}_{13}F_{12}(1)$ series 
(see Proposition \ref{13P3} in Section 5) by adding two more
pairs of numerator and denominator parameters with unit difference. 

In Section 4.5 of Bailey's tract \cite{Ba}, one can find a number of transformation formulas
involving terminating nearly-poised series discovered by Whipple and Bailey. 
The identities \cite[Eq.\ 4.5.1]{Ba} (originally found by Whipple in \cite{Whipple5}) and
\cite[Eq.\ 4.5.2]{Ba} (originally found by Bailey in \cite{Bailey3}) relate nearly-poised
${}_4F_3(1)$ and ${}_5F_4(1)$  series, respectively, to Saalsch\"utzian ${}_5F_4(1)$ series.
In this paper (see Corollary \ref{2C3P16} in Section 3),
we extend these two identities to a single transformation formula between a ${}_5F_4(1)$ series,
which is {\it not} nearly-poised but in which {\it two} pairs of numerator and denominator parameters 
deviate from ``well-poisedness", and a Saalsch\"utzian ${}_6F_5(1)$ series. 
In addition, this extension is further generalized by Proposition \ref{3P16} in Section 3 to a relation between
two terminating ${}_7F_6(1)$ series, one of which is Saalsch\"utzian. A special case of Proposition \ref{3P16}
given  in Corollary \ref{1C3P16}
coincides with a recent special case of a result of Maier \cite{Maier}.

The classical Whipple transformation between a very-well-poised ${}_7F_6(1)$ and a 
Saalsch\"utzian ${}_4F_3(1)$ series 
(see \cite{Whipple2}, \cite{Whipple4} and \cite[Eq.\ 4.3.4]{Ba})
was recently generalized by Srivastava, Vyas and Fatawat \cite[Theorem 3.2]{SVF}.
In this paper, we provide a very general result in Proposition \ref{11P2} that extends both 
 Srivastava, Vyas and Fatawat's result \cite[Theorem 3.2]{SVF} and the results described in the
paragraph above.

In Section 4 of this paper, we study transformations between two terminating Saalsch\"utzian series.
The classical result in this area is the Whipple transform (see \cite{Whipple2}, \cite{Whipple3} and \cite[Eq.\ 7.2.1]{Ba}) 
involving two terminating Saalsch\"utzian
${}_4F_3(1)$ series. In Proposition \ref{12P2} in Section 4, we extend the Whipple transform to a transformation
that involves  two terminating Saalsch\"utzian
${}_6F_5(1)$ series in each of which series 
two numerator parameters exceed two 
denominator parameters by one.

In Section 5, in addition to obtaining the above-mentioned relation involving two terminating
very-well-poised ${}_{13}F_{12}(1)$ series, we also obtain extensions of
classical results found by Bailey in 
\cite[Eqs.\ 8.1, 8.2 and 8.3]{Bailey3}
and reproduced in
\cite[Eqs.\ 4.5.3, 4.5.4 and 4.5.5]{Ba}
that transform terminating nearly-poised ${}_5F_4(1)$ series
with parametric excesses $\omega=1$ and $\omega=2$ and
a terminating nearly-poised ${}_6F_5(1)$ series 
with parametric excess $\omega=1$ to
terminating very-well-poised ${}_9F_8(1)$ series. 
The immediate extension of \cite[Eqs.\ 4.5.3, 4.5.4 and 4.5.5]{Ba}
is given in Corollary \ref{1C13P4}, and a further extension is provided in 
Proposition \ref{13P4}.

To obtain our results, we use a method employed by Bailey in \cite{Bailey3} and
\cite[Chapter 4]{Ba} that utilizes sums of series of lower order to obtain transformations
of series of higher order. 
The extensions of Bailey's general formulas \cite[Eqs.\ 4.3.1 and 4.3.6]{Ba} are provided in
Propositions \ref{11P1} and \ref{13P1}, respectively. 
We should point out that Bailey uses the Pfaff--Saalsch\"utz formula 
(see \cite[Eq.\ 2.2.1]{Ba}) to obtain \cite[Eq.\ 4.3.1]{Ba} while we use the
extension of the Pfaff--Saalsch\"utz formula to a ${}_4F_3(1)$ series
given by Rakha and Rathie \cite{RR} to obtain the more general
Proposition \ref{11P1}. Moreover, Bailey uses Dougall's theorem 
(see \cite[Eq.\ 6]{Dougall}, \cite{Hardy2} and \cite[Eq.\ 4.3.5]{Ba}) to obtain
\cite[Eq.\ 4.3.6]{Ba} and we use the extension of Dougall's theorem to
a ${}_9F_8(1)$ series summation provided by Srivastava, Vyas and Fatawat 
\cite[Theorem 3.3]{SVF} to prove the more general Proposition \ref{13P1}.
Finally, in this paper, 
not only do we use 
the method just described with
known summations of series, 
but we also use it with a known {\it transformation} of series and then reverse the
order of summation to obtain a new transformation (see Proposition \ref{11P2}).

The method described above that we use in this paper is parallel to the Bailey's transform
(see \cite{BaileyRR1}, \cite{BaileyRR2} and \cite[pp.\ 58--74]{Slater}), 
which is employed by Srivastava, Vyas and Fatawat in \cite{SVF}. Both methods
can be utilized to obtain many of the known transformations of hypergeometric series. 
We should also note that, according to the Karlsson--Minton summation formula
(see \cite{Karlsson}), any hypergeometric series in which numerator and denominator
parameters differ by positive integers can be written as a finite sum of hypergeometric
series of lower order, but we have not used this approach or formula in our paper.

Finally, in Section 6, by taking certain limits of relations in Section 3, we obtain
extensions of some classical quadratic transformations of hypergeometric series.
In particular, we obtain extensions in terms of single transformations of both
the quadratic transformation found by Whipple in \cite[Eq.\ $(7.1)$]{Whipple5}
and its companion transformation found by Bailey in \cite[Eq.\ $(9.1)$]{Bailey3}.
Also,
a special case of our most general extension there (see (\ref{1e6P1}) in Section 6
below) coincides with a special case of a transformation of 
Maier \cite[Theorem $3.4$]{Maier}.

\section{Preliminaries}

The hypergeometric series of type ${}_{r}F_s$ is defined by
\begin{equation}
\label{210}
{}_{r}F_s \left[ {\displaystyle a_1,a_2,\ldots,a_{r};
\atop \displaystyle b_1,b_2,\ldots,b_s;} z\right] =
\sum_{n=0}^{\infty} \frac{(a_1)_n(a_2)_n \cdots
(a_{r})_n}{n!(b_1)_n(b_2)_n \cdots (b_s)_n}z^n,
\end{equation}
where $r$ and $s$ are nonnegative integers, $a_1,a_2,\ldots,a_{r},
b_1,b_2,\ldots,b_s, z\in \mathbb{C}$, and the rising factorial
$(a)_n$ is given by
\begin{equation*}
(a)_n=\left\{ \begin{array}{ll}
a(a+1)\cdots(a+n-1), & n>0,\\
1, & n=0.
\end{array} \right.
\end{equation*}

In this paper we will be mostly interested in the case where $r=s+1$.
The series of type ${}_{s+1}F_s$ converges absolutely if $|z|<1$ or if
$|z|=1$ and $\textrm{Re}(\sum_{i=1}^sb_i-\sum_{i=1}^{s+1}a_i)>0$ (see
\cite[p.\ 8]{Ba}). We assume that no denominator parameter
$b_1,b_2,\ldots,b_s$ is a negative integer or zero. If a numerator
parameter $a_1,a_2,\ldots,a_{s+1}$ is a negative integer or zero, the
series has only finitely many nonzero terms and is said to 
{\it terminate}.

When $z=1$, we say that the series is of {\it unit argument} and of type
${}_{s+1}F_s(1)$. If $\sum_{i=1}^sb_i-\sum_{i=1}^{s+1}a_i=1$, the
series is called {\it Saalsch\"utzian}. If
$1+a_1=b_1+a_2=\cdots=b_s+a_{s+1}$, the series is called 
{\it well-poised}.
A well-poised series that satisfies $a_2=1+\frac{1}{2}a_1$ is called
{\it very-well-poised}.
The {\it parametric excess} $\omega$ is given by
$\omega=\sum_{i=1}^sb_i-\sum_{i=1}^{s+1}a_i$.
Note that $\omega=1$ for a Saalsch\"utzian series.

We will use the following extension of the classical Chu--Vandermonde formula
(see \cite[Section 1.3]{Ba}), which extension sums a special terminating
${}_3F_2(1)$ series where a numerator parameter exceeds a denominator
parameter by one:
\begin{equation}
\label{1e3f2}
{}_3F_2 \left( \left. 
{\displaystyle a,p+1,-n
\atop \displaystyle b,p}\right| 
1\right)
=\frac{(b-a-1)_n(q+1)_n}{(b)_n(q)_n},
\end{equation}
where
\begin{equation}
\label{2e3f2}
q=\frac{p(b-a-1)}{p-a}.
\end{equation}

The above formula (\ref{1e3f2}) appears in \cite{Miller}. A nonterminating version
of the same formula can be found in \cite[p.\ 534, Eq.\ (10)]{PBM}.
Letting $p \to \infty$ in (\ref{1e3f2}) yields the Chu--Vandermonde formula.

We will also use the following extension of the Pfaff--Saalsch\"utz formula 
(see \cite[Eq.\ 2.2.1]{Ba}) given by Rakha and Rathie \cite{RR} which
finds the sum of a special terminating Saalsch\"utzian ${}_4F_3(1)$ series
where a numerator parameter exceeds a denominator parameter by one:
\begin{eqnarray}
\label{e1R1C1P18}
&&{}_4F_3 \left( \left. 
{\displaystyle a,b,p+1,-n
\atop \displaystyle c,p,2+a+b-c-n}\right| 
1\right)\nonumber\\
&&=\frac{(c-a-1)_n(c-b-1)_n(q+1)_n}
{(c)_n(c-a-b-1)_n(q)_n},
\end{eqnarray}  
where
\begin{equation}
\label{e2R1C1P18}
q=\frac{p(c-a-1)(c-b-1)}
{ab+p(c-a-b-1)}.
\end{equation} 

Letting $p=b$ in (\ref{e1R1C1P18}) yields the Pfaff--Saalsch\"utz formula,
while letting $b \to \infty$ in (\ref{e1R1C1P18}) gives (\ref{1e3f2}).

We note that (\ref{e1R1C1P18}) can also be written as
\begin{eqnarray}
\label{e3R1C1P18}
&&{}_4F_3 \left( \left. 
{\displaystyle a-b-c,\gamma_1+1,a+n,-n
\atop \displaystyle 1+a-b,1+a-c,\gamma_1}\right| 
1\right)\nonumber\\
&&=\frac{(b)_n(c)_n(a-p+1)_n(p+1)_n}
{(1+a-b)_n(1+a-c)_n(p)_n(a-p)_n},
\end{eqnarray}  
where
\begin{equation}
\label{e4R1C1P18}
\gamma_1=\frac{p(a-p)(b+c-a)}
{bc-p(a-p)},
\end{equation} 
and as
\begin{eqnarray}
\label{e5R1C1P18}
&&{}_4F_3 \left( \left. 
{\displaystyle c-a-1,c-b-1,\gamma_2+1,-n
\atop \displaystyle c,\gamma_2,c-a-b-n}\right| 
1\right)\nonumber\\
&&=\frac{(a)_n(b)_n(p+1)_n}
{(c)_n(1+a+b-c)_n(p)_n},
\end{eqnarray}  
where
\begin{equation}
\label{e6R1C1P18}
\gamma_2=\frac{p(c-a-1)(c-b-1)}
{ab+p(c-a-b-1)}.
\end{equation} 

We will use (\ref{e3R1C1P18}) and (\ref{e5R1C1P18})
in Sections 3 and 4, respectively, where we study extensions of 
transformations of nearly-poised and very-well-poised series to
Saalsch\"utzian series and extensions of transformations of
Saalsch\"utzian series to Saalsch\"utzian series.

In \cite[Theorem 3.3]{SVF}, 
Srivastava, Vyas and Fatawat find a generalization of the classical Dougall's theorem
for the sum of a terminating very-well-poised ${}_7F_6(1)$ series with
parametric excess $\omega=2$ 
(see \cite[Eq.\ 6]{Dougall}, \cite{Hardy2} and \cite[Eq.\ 4.3.5]{Ba}). 
The generalization found by Srivastava, Vyas and Fatawat can be written as
\begin{eqnarray}
\label{1e1R3C11P2}
&&{}_9F_8 \left(  
{a,1+\frac{a}{2},b,c,d,
\atop 
\frac{a}{2},1+a-b,1+a-c,1+a-d,}
\right.\\
&&\left. \left. 
{2a-b-c-d+n,a-p+1,p+1,-n;
\atop 
1+b+c+d-a-n,p,a-p,1+a+n;}
\right| 1\right)
\nonumber\\
&&=
\frac{(1+a)_n(a-b-c)_n(a-b-d)_n(a-c-d)_n(\alpha+1)_n}
{(1+a-b)_n(1+a-c)_n(1+a-d)_n(a-b-c-d)_n(\alpha)_n},\nonumber
\end{eqnarray}
where
\begin{equation}
\label{2e1R3C11P2}
\alpha
=\frac{p(a-p)(a-b-c)(a-b-d)(a-c-d)}
{(2a-b-c-d+n)(bcd+p(a-p)(a-b-c-d))}.
\end{equation}

Dougall's theorem follows from (\ref{1e1R3C11P2})
by letting $p=b$.
We note that (\ref{1e1R3C11P2}) can also be written as
\begin{eqnarray}
\label{1e2R3C11P2}
&&{}_9F_8 \left(  
{\lambda,1+\frac{\lambda}{2},\lambda+b-a,\lambda+c-a,\lambda+d-a,
\atop 
\frac{\lambda}{2},1+a-b,1+a-c,1+a-d,}
\right.\\
&&\left. \left. 
{a+n,\frac{\lambda}{2}-\gamma+1,\frac{\lambda}{2}+\gamma+1,-n;
\atop 
1+\lambda-a-n,\frac{\lambda}{2}+\gamma,\frac{\lambda}{2}-\gamma,1+\lambda+n;}
\right| 1\right)
\nonumber\\
&&=
\frac{(1+\lambda)_n(b)_n(c)_n(d)_n(a-p+1)_n(p+1)_n}
{(a-\lambda)_n(1+a-b)_n(1+a-c)_n(1+a-d)_n(p)_n(a-p)_n},\nonumber
\end{eqnarray}
where
\begin{equation}
\label{2e2R3C11P2}
\lambda=2a-b-c-d
\end{equation}
and
\begin{equation}
\label{3e2R3C11P2}
\gamma^2
=\frac{\lambda^2}{4}
-\frac{p(a-p)(a-b-c)(a-b-d)(a-c-d)}
{bcd+p(a-p)(a-b-c-d)}.
\end{equation}

We will use (\ref{1e2R3C11P2}) in Section 5
where we study extensions of transformations of
very-well-poised and nearly-poised series to very-well-poised series.

\section{Extensions of hypergeometric transformations of nearly-poised and very-well-poised 
series to
Saalsch\"utzian series}

In this section, we study extensions of the classical identities given in
\cite[Eqs.\ 4.5.1, 4.5.2 and 4.3.4]{Ba}. We begin with a general formula
that extends \cite[Eq.\ 4.3.1]{Ba}: 

\begin{Proposition}
\label{11P1}
Let
\begin{equation}
\label{1e11P1}
\gamma
=\frac{p(a-p)(b+c-a)}{bc-p(a-p)}.
\end{equation}
Then
\begin{eqnarray}
\label{2e11P1}
&&{}_{r+6}F_{s+4} \left( \left. 
{\displaystyle a,b,c,a-p+1,p+1,a_1,\ldots,a_r,-n
\atop \displaystyle 1+a-b,1+a-c,p,a-p,b_1,\ldots,b_s}\right| 
x\right) \nonumber\\
&&=\sum_{m=0}^n \left(
\frac{\left(\frac{a}{2}\right)_m\left(\frac{a+1}{2}\right)_m(a-b-c)_m(\gamma+1)_m(a_1)_m\cdots(a_r)_m(-n)_m(-4x)^m}
{m!(1+a-b)_m(1+a-c)_m(\gamma)_m(b_1)_m\cdots(b_s)_m}\right.\nonumber\\
&&\times \left. 
{}_{r+2}F_{s} \left( \left. 
{\displaystyle a+2m,a_1+m,\ldots,a_r+m,-n+m
\atop \displaystyle b_1+m,\ldots,b_s+m}\right| 
x\right) \right).
\end{eqnarray} 
\end{Proposition}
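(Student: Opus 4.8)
The plan is to expand both sides as polynomials in $x$ and to match the coefficient of $x^N$ for each $N$ with $0 \le N \le n$. Both sides are polynomials in $x$ of degree at most $n$: on the right the factor $(-n)_m$ together with the inner numerator parameter $-n+m$ forces $m+k \le n$, where $k$ is the summation index of the inner ${}_{r+2}F_s$ series, while on the left the factor $(-n)_N$ vanishes for $N>n$. First I would write the inner ${}_{r+2}F_s$ series on the right-hand side as its defining sum over $k$ and reindex the resulting double sum by $N=m+k$, so that the coefficient of $x^N$ on the right becomes a single finite sum over $m$ from $0$ to $N$.

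The key simplification is then pure Pochhammer bookkeeping. Using the elementary identity $(\alpha)_m(\alpha+m)_{N-m}=(\alpha)_N$, the generic numerator factors $(a_i)_m(a_i+m)_{N-m}$, the denominator factors $(b_i)_m(b_i+m)_{N-m}$, and the factors $(-n)_m(-n+m)_{N-m}$ all collapse to $(a_i)_N$, $(b_i)_N$, and $(-n)_N$, respectively, matching exactly the corresponding factors in the coefficient of $x^N$ on the left. After cancelling this common block $(a_1)_N\cdots(a_r)_N(-n)_N/\big[(b_1)_N\cdots(b_s)_N\big]$, it remains to verify a reduced identity whose only surviving right-hand factors are the half-shifted pair $(a/2)_m\big((a+1)/2\big)_m$, the factors $(a-b-c)_m$, $(\gamma+1)_m$, $(\gamma)_m$, the power $(-4)^m$, and the leftover $(a+2m)_{N-m}$ coming from the inner series. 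Here I would invoke the duplication formula $(a/2)_m\big((a+1)/2\big)_m=(a)_{2m}/4^m$ together with $(a)_{2m}(a+2m)_{N-m}=(a)_{N+m}=(a)_N(a+N)_m$; this turns the combination $(-4)^m(a/2)_m\big((a+1)/2\big)_m(a+2m)_{N-m}$ into $(-1)^m(a)_N(a+N)_m$. Finally, rewriting $1/(N-m)!$ as $(-1)^m(-N)_m/N!$ absorbs the remaining sign, so that, up to the prefactor $(a)_N/N!$, the inner sum becomes exactly the terminating ${}_4F_3(1)$ series on the left of (\ref{e3R1C1P18}) with $n$ replaced by $N$.

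At this point the result is immediate. Since the definition (\ref{1e11P1}) of $\gamma$ coincides with (\ref{e4R1C1P18}), the Rakha--Rathie summation (\ref{e3R1C1P18}) evaluates this inner ${}_4F_3(1)$ to $(b)_N(c)_N(a-p+1)_N(p+1)_N/\big[(1+a-b)_N(1+a-c)_N(p)_N(a-p)_N\big]$, and multiplying by $(a)_N/N!$ reproduces precisely the coefficient of $x^N$ on the left-hand side. I expect the only genuine obstacle to be the careful bookkeeping in the reduction step --- in particular, tracking the power of $-4$ through the duplication formula and confirming that $(a+2m)_{N-m}$ telescopes correctly against $(a)_{2m}$ --- rather than any conceptual difficulty, since once the inner sum is identified as a ${}_4F_3(1)$, the summation (\ref{e3R1C1P18}) does all the remaining work.
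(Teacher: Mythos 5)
Your proof is correct and is essentially the paper's argument run in reverse: the paper starts from the left-hand side, uses (\ref{e3R1C1P18}) to replace the ratio $(b)_k(c)_k(a-p+1)_k(p+1)_k/[(1+a-b)_k(1+a-c)_k(p)_k(a-p)_k]$ by the terminating ${}_4F_3(1)$ and then interchanges the order of summation, while you expand the right-hand side, reindex by $N=m+k$, and use the same summation (\ref{e3R1C1P18}) to collapse the inner sum. The key lemma, the reindexing, and the duplication/Pochhammer bookkeeping are identical, so this is the same proof presented coefficient-by-coefficient.
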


\begin{proof}
Using (\ref{e3R1C1P18}), we have
\begin{eqnarray*}
&&{}_{r+6}F_{s+4} \left( \left. 
{\displaystyle a,b,c,a-p+1,p+1,a_1,\ldots,a_r,-n
\atop \displaystyle 1+a-b,1+a-c,p,a-p,b_1,\ldots,b_s}\right| 
x\right) \\
&&=\sum_{k=0}^n 
\frac{(a)_k(b)_k(c)_k(a-p+1)_k(p+1)_k(a_1)_k\cdots (a_r)_k(-n)_kx^k}
{k!(1+a-b)_k(1+a-c)_k(p)_k(a-p)_k(b_1)_k\cdots (b_s)_k}\\
&&=\sum_{k=0}^n \left(
\frac{(a)_k(a_1)_k\cdots (a_r)_k(-n)_kx^k}
{k!(b_1)_k\cdots (b_s)_k}\right.\\
&&\left.\times {}_4F_3 \left( \left. 
{\displaystyle a-b-c,\gamma+1,a+k,-k
\atop \displaystyle 1+a-b,1+a-c,\gamma}\right| 
1\right)\right),
\end{eqnarray*}
where
\begin{equation*}
\gamma=\frac{p(a-p)(b+c-a)}
{bc-p(a-p)}.
\end{equation*} 

We write the ${}_4F_3$ series on the right-hand side above as
a summation, switch the order of summation in the resulting
expression, and then simplify to obtain (\ref{2e11P1}).
\end{proof}

\begin{Remark}
\label{1R11P1}
Formula (\ref{2e11P1}) is an extension of 
\cite[Eq.\ 4.3.1]{Ba}. In fact, \cite[Eq.\ 4.3.1]{Ba}
follows from (\ref{2e11P1}) by letting $x=1$ and $p \to \infty$.
\end{Remark}

We next use Proposition \ref{11P1} to obtain a generalization of 
\cite[Eqs.\ 4.5.1 and 4.5.2]{Ba}:

\begin{Proposition}
\label{3P16}
We have
\begin{eqnarray}
\label{1e3P16}
&&{}_7F_6 \left( \left. 
{\displaystyle a,b,c,a-p+1,p+1,q+1,-n
\atop \displaystyle 1+a-b,1+a-c,p,a-p,q,w}\right| 
1\right)\\
&&=\frac{(w-a-1)_n(\alpha+1)_n}
{(w)_n(\alpha)_n}\nonumber\\
&&\times
{}_7F_6 \left( \left. 
{\displaystyle 1+a-w,\frac{a}{2},\frac{a+1}{2},a-b-c,\beta+1,\gamma+1,-n
\atop \displaystyle 1+a-b,1+a-c,\frac{2+a-w-n}{2},\frac{3+a-w-n}{2},\beta,\gamma}\right| 
1\right),\nonumber 
\end{eqnarray}
where
\begin{equation}
\label{2e3P16}
\alpha=\frac{q(1+a-w)}{a-q},
\end{equation}
\begin{equation}
\label{3e3P16}
\beta=\frac{q(1+a-w)+n(a-q)}{1+2q-w+n}
\end{equation}
and
\begin{equation}
\label{4e3P16}
\gamma=\frac{p(a-p)(b+c-a)}{bc-p(a-p)}.
\end{equation}
\end{Proposition}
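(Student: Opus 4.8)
The plan is to specialize the general formula of Proposition~\ref{11P1} and then evaluate the inner series that it produces in closed form. First I would apply (\ref{2e11P1}) with $r=1$, $s=2$, $x=1$, $a_1=q+1$, $b_1=q$ and $b_2=w$; with these choices the $\gamma$ of (\ref{1e11P1}) is exactly the $\gamma$ of (\ref{4e3P16}). This rewrites the left-hand side of (\ref{1e3P16}) as a single sum over $m$ in which the summand is $\frac{(\frac a2)_m(\frac{a+1}2)_m(a-b-c)_m(\gamma+1)_m(q+1)_m(-n)_m(-4)^m}{m!\,(1+a-b)_m(1+a-c)_m(\gamma)_m(q)_m(w)_m}$ multiplied by the inner terminating series ${}_3F_2(a+2m,\,q+1+m,\,-n+m;\,q+m,\,w+m;\,1)$.

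The key observation is that in this inner ${}_3F_2(1)$ the numerator parameter $q+1+m$ exceeds the denominator parameter $q+m$ by one, so (\ref{1e3f2}) applies verbatim. Identifying its parameters as $a\mapsto a+2m$, $p\mapsto q+m$, $b\mapsto w+m$ and $-n\mapsto-(n-m)$, formula (\ref{1e3f2}) evaluates the inner series as $\frac{(w-a-m-1)_{n-m}(q^{\ast}+1)_{n-m}}{(w+m)_{n-m}(q^{\ast})_{n-m}}$, where $q^{\ast}=\frac{(q+m)(w-a-m-1)}{q-a-m}$ by (\ref{2e3f2}). This collapses the double sum into a single sum over $m$, which I would then reorganize into the ${}_7F_6(1)$ on the right-hand side of (\ref{1e3P16}).

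For the reorganization I would use three elementary identities: the duplication formula $(\tfrac x2)_m(\tfrac{x+1}2)_m=(x)_{2m}/4^m$ (to absorb both the $(-4)^m$ and the half-integer denominator parameters $\tfrac{2+a-w-n}2,\tfrac{3+a-w-n}2$), the reflection formula $(x)_k=(-1)^k(1-x-k)_k$ applied to $(w-a-m-1)_{n-m}$ (turning it into $(-1)^{n-m}(2+a-w-n+2m)_{n-m}$), together with the splittings $(w)_m(w+m)_{n-m}=(w)_n$ and $(2+a-w-n)_{n+m}=(-1)^n(w-a-1)_n(2+a-w)_m$. After these substitutions the claimed equality reduces to the term-by-term statement that, for $0\le m\le n$, $(2+a-w)_m\,\dfrac{q+m}{q}\,\dfrac{q^{\ast}+n-m}{q^{\ast}}=\dfrac{\alpha+n}{\alpha}\,\dfrac{\beta+m}{\beta}\,(1+a-w)_m$, where I have used $(x+1)_k/(x)_k=(x+k)/x$ to collapse the $q$-, $q^{\ast}$-, $\alpha$- and $\beta$-ratios. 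At $m=0$ this reads $(w-a-1)_n(q^{\ast}_0+1)_n/\big((w)_n(q^{\ast}_0)_n\big)$ with $q^{\ast}_0=\alpha$, which is exactly the prefactor of (\ref{1e3P16}).

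The final step is to verify this last identity. Dividing it by $(1+a-w)_m$ and using $\dfrac{(2+a-w)_m}{(1+a-w)_m}=\dfrac{1+a-w+m}{1+a-w}$ leaves $\dfrac{1+a-w+m}{1+a-w}\,\dfrac{q+m}{q}\,\dfrac{q^{\ast}+n-m}{q^{\ast}}=\dfrac{\alpha+n}{\alpha}\,\dfrac{\beta+m}{\beta}$. Substituting the explicit $q^{\ast}$ and using $1+a-w+m=-(w-a-m-1)$, the left-hand side collapses to $\dfrac{N}{q(w-a-1)}$ with $N=q(w-a-1)+n(q-a)+(w-n-2q-1)m$, a linear function of $m$. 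Matching its constant term against $\dfrac{\alpha+n}{\alpha}$ forces $\alpha=\dfrac{q(w-a-1)}{q-a}$, i.e.\ (\ref{2e3P16}), and matching its $m$-slope then forces $\beta=\dfrac{q(w-a-1)+n(q-a)}{w-n-2q-1}$, i.e.\ (\ref{3e3P16}). I expect the main obstacle to be precisely this bookkeeping: because the auxiliary parameter $q^{\ast}$ coming out of (\ref{1e3f2}) itself depends on the summation index $m$, one must first confirm that $(q+m)(q^{\ast}+n-m)/q^{\ast}$ is genuinely linear in $m$ before the values of $\alpha$ and $\beta$ can be read off.
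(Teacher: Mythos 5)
Your proposal is correct and follows exactly the paper's route: specialize Proposition \ref{11P1} with $a_1=q+1$, $b_1=q$, $b_2=w$, $x=1$, evaluate the resulting inner ${}_3F_2(1)$ by (\ref{1e3f2}), and simplify. The bookkeeping you carry out (the duplication and reflection identities, the cancellation of the $m^2$ terms so that the $q^{\ast}$-ratio is linear in $m$, and the identification of $\alpha$ and $\beta$ from the constant term and slope) is exactly the ``simplify'' step the paper leaves implicit, and it checks out.
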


\begin{proof}
Use $q+1,q,w,1$ for $a_1,b_1,b_2,x$, respectively, in (\ref{2e11P1}) to obtain
\begin{eqnarray*}
&&{}_{7}F_{6} \left( \left. 
{\displaystyle a,b,c,a-p+1,p+1,q+1,-n
\atop \displaystyle 1+a-b,1+a-c,p,a-p,q,w}\right| 
1\right) \\
&&=\sum_{m=0}^n \left(
\frac{\left(\frac{a}{2}\right)_m\left(\frac{a+1}{2}\right)_m(a-b-c)_m(\gamma+1)_m(q+1)_m(-n)_m(-4)^m}
{m!(1+a-b)_m(1+a-c)_m(\gamma)_m(q)_m(w)_m}\right.\nonumber\\
&&\times \left. 
{}_{3}F_{2} \left( \left. 
{\displaystyle a+2m,q+1+m,-n+m
\atop \displaystyle q+m,w+m}\right| 
1\right) \right),
\end{eqnarray*} 
where
\begin{equation*}
\gamma
=\frac{p(a-p)(b+c-a)}{bc-p(a-p)}.
\end{equation*}

Sum the ${}_3F_2$ series on the right-hand side above according to (\ref{1e3f2}) 
and simplify to obtain the result.
\end{proof}

We note that 
the ${}_7F_6$ series on the left-hand side of (\ref{1e3P16}) deviates from
a well-poised series in two pairs of numerator and denominator parameters while
the ${}_7F_6$ series on the right-hand side of (\ref{1e3P16}) is Saalsch\"utzian.

Letting $q \to \infty$ in (\ref{1e3P16}), we obtain the following special case:

\begin{Corollary}
\label{1C3P16}
We have
\begin{eqnarray}
\label{1e1C3P16}
&&{}_6F_5 \left( \left. 
{\displaystyle a,b,c,a-p+1,p+1,-n
\atop \displaystyle 1+a-b,1+a-c,p,a-p,w}\right| 
1\right)\\
&&=\frac{(w-a)_n}
{(w)_n}\nonumber\\
&&\times
{}_6F_5 \left( \left. 
{\displaystyle 1+a-w,\frac{a}{2},\frac{a+1}{2},a-b-c,\gamma+1,-n
\atop \displaystyle 1+a-b,1+a-c,\frac{1+a-w-n}{2},\frac{2+a-w-n}{2},\gamma}\right| 
1\right),\nonumber 
\end{eqnarray}  
where
\begin{equation}
\label{2e1C3P16}
\gamma=\frac{p(a-p)(b+c-a)}{bc-p(a-p)}.
\end{equation}
\end{Corollary}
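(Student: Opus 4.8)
The plan is to derive the corollary directly from Proposition \ref{3P16} by letting $q \to \infty$ in (\ref{1e3P16}). Since every series appearing in (\ref{1e3P16}) terminates because of the numerator parameter $-n$, each side is a finite sum whose individual terms are rational functions of $q$; hence the limit may be taken term by term, and the only work is to track how the relevant factors behave as $q \to \infty$.

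First I would examine the left-hand side. The numerator parameter $q+1$ and the denominator parameter $q$ contribute a factor $(q+1)_k/(q)_k = (q+k)/q$ to the $k$-th term, which tends to $1$ as $q \to \infty$ for each fixed $k$. Thus the ${}_7F_6$ on the left collapses to the ${}_6F_5$ series
$${}_6F_5 \left( \left. {\displaystyle a,b,c,a-p+1,p+1,-n \atop \displaystyle 1+a-b,1+a-c,p,a-p,w}\right| 1\right),$$
which is exactly the left-hand side of (\ref{1e1C3P16}).

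Next I would handle the prefactor and the right-hand series. From (\ref{2e3P16}), dividing numerator and denominator of $\alpha$ by $q$, one computes $\lim_{q\to\infty}\alpha = w-a-1$. Consequently $(\alpha)_n \to (w-a-1)_n$ cancels the explicit $(w-a-1)_n$ in the numerator of the prefactor, while $(\alpha+1)_n \to (w-a)_n$, so that $\frac{(w-a-1)_n(\alpha+1)_n}{(w)_n(\alpha)_n} \to \frac{(w-a)_n}{(w)_n}$, matching the prefactor of (\ref{1e1C3P16}). Similarly, from (\ref{3e3P16}) one finds $\lim_{q\to\infty}\beta = \frac{1+a-w-n}{2}$. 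The key observation is that in this limit the numerator parameter $\beta+1$ tends to $\frac{3+a-w-n}{2}$, which cancels the pre-existing denominator parameter $\frac{3+a-w-n}{2}$ in the right-hand ${}_7F_6$, while the denominator parameter $\beta$ tends to $\frac{1+a-w-n}{2}$. After this cancellation the six surviving numerator parameters are $1+a-w, \frac{a}{2}, \frac{a+1}{2}, a-b-c, \gamma+1, -n$ and the five surviving denominator parameters are $1+a-b, 1+a-c, \frac{1+a-w-n}{2}, \frac{2+a-w-n}{2}, \gamma$, which is precisely the ${}_6F_5$ on the right of (\ref{1e1C3P16}). Since $\gamma$ in (\ref{4e3P16}) does not involve $q$, it passes unchanged to (\ref{2e1C3P16}), completing the derivation.

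The only delicate point — and the step I would verify most carefully — is the cancellation on the right-hand side, where one must confirm that it is the limit of $\beta+1$ (not of $\beta$ itself) that annihilates the half-integer denominator parameter $\frac{3+a-w-n}{2}$, with $\beta$ supplying the new denominator parameter $\frac{1+a-w-n}{2}$; an off-by-one error in computing $\lim_{q\to\infty}\beta$ would spoil the match between the two half-integer parameters and hence the entire reduction.
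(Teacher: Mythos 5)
Your proposal is correct and follows exactly the paper's route: the paper obtains Corollary \ref{1C3P16} precisely by letting $q \to \infty$ in (\ref{1e3P16}). Your limit computations ($\alpha \to w-a-1$, $\beta \to \tfrac{1+a-w-n}{2}$, and the resulting cancellation of $\beta+1$ against the denominator parameter $\tfrac{3+a-w-n}{2}$) are all accurate and supply the details the paper leaves implicit.
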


We remark that (\ref{1e1C3P16}) is the special case $k=1$
of \cite[Theorem 7.1(ii)]{Maier} as well as
\cite[Cor.\ 4]{WR}.

On the other hand, letting $p \to \infty$ in (\ref{1e3P16}), we obtain the following result:

\begin{Corollary}
\label{2C3P16}
We have
\begin{eqnarray}
\label{1e2C3P16}
&&{}_5F_4 \left( \left. 
{\displaystyle a,b,c,q+1,-n
\atop \displaystyle 1+a-b,1+a-c,q,w}\right| 
1\right)\\
&&=\frac{(w-a-1)_n(\alpha+1)_n}
{(w)_n(\alpha)_n}\nonumber\\
&&\times
{}_6F_5 \left( \left. 
{\displaystyle 1+a-w,\frac{a}{2},\frac{a+1}{2},1+a-b-c,\beta+1,-n
\atop \displaystyle 1+a-b,1+a-c,\frac{2+a-w-n}{2},\frac{3+a-w-n}{2},\beta}\right| 
1\right),\nonumber 
\end{eqnarray}  
where
\begin{equation}
\label{2e2C3P16}
\alpha=\frac{q(1+a-w)}{a-q}
\end{equation}
and
\begin{equation}
\label{3e2C3P16}
\beta=\frac{q(1+a-w)+n(a-q)}{1+2q-w+n}.
\end{equation}
\end{Corollary}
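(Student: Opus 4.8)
The plan is to derive this corollary by letting $p \to \infty$ in the ${}_7F_6$ transformation (\ref{1e3P16}) of Proposition \ref{3P16}, as the sentence preceding the statement indicates. Since (\ref{1e3P16}) holds identically in $p$, it is enough to compute the $p\to\infty$ limit of each side. Both sides are terminating series — the parameter $-n$ truncates every sum at the $n$-th term — so for generic $a,b,c,q,w$ each summand is a rational function of $p$ possessing a finite limit as $p\to\infty$, whence the finite sums pass to the limit termwise and I need only identify the limiting series on each side. On the left, the sole $p$-dependence of the ${}_7F_6$ lies in the two parameter pairs $(a-p+1,a-p)$ and $(p+1,p)$, whose contribution to the $k$-th term is
\[
\frac{(a-p+1)_k(p+1)_k}{(a-p)_k(p)_k}=\frac{a-p+k}{a-p}\cdot\frac{p+k}{p}\xrightarrow[p\to\infty]{}1;
\]
hence both pairs drop out and the left-hand ${}_7F_6$ collapses to the ${}_5F_4$ with numerator parameters $a,b,c,q+1,-n$ and denominator parameters $1+a-b,1+a-c,q,w$, which is exactly the left-hand side of (\ref{1e2C3P16}).

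For the right-hand side, the prefactor together with $\alpha$ and $\beta$ of (\ref{2e3P16}) and (\ref{3e3P16}) are independent of $p$ and so are unaffected. The parameter $\gamma$ of (\ref{4e3P16}), on the other hand, satisfies $\gamma=p(a-p)(b+c-a)/\bigl(bc-p(a-p)\bigr)\to a-b-c$ as $p\to\infty$. Consequently the three parameters $a-b-c$, $\gamma+1$, $\gamma$ of the right-hand ${}_7F_6$ coalesce in the $m$-th term:
\begin{align*}
(a-b-c)_m\,\frac{(\gamma+1)_m}{(\gamma)_m}
&=(a-b-c)_m\,\frac{\gamma+m}{\gamma}\\
&\xrightarrow[p\to\infty]{}(a-b-c)_m\,\frac{a-b-c+m}{a-b-c}=(1+a-b-c)_m,
\end{align*}
the last step being the elementary Pochhammer identity $(u)_m(u+m)/u=(u+1)_m$. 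Thus the pair $(\gamma+1,\gamma)$ merges with the numerator parameter $a-b-c$ into the single numerator parameter $1+a-b-c$, so the right-hand ${}_7F_6$ reduces to the ${}_6F_5$ displayed in (\ref{1e2C3P16}). Assembling the two limits gives the asserted identity.

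The argument is routine once termination is invoked, since no convergence questions arise for finite sums. The one point demanding care — and the main (if mild) obstacle — is the behavior of $\gamma$: it does not vanish but tends to $a-b-c$, and this limit combines with the pre-existing parameter $a-b-c$ so that three hypergeometric parameters fuse into one. This fusion is precisely what lowers the order from ${}_7F_6$ to ${}_6F_5$ and replaces $a-b-c$ by $1+a-b-c$ in the resulting series.
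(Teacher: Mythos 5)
Your proposal is correct and is exactly the paper's route: the paper obtains Corollary \ref{2C3P16} precisely by letting $p\to\infty$ in (\ref{1e3P16}), and your termwise limit computation — including the observation that $\gamma\to a-b-c$ so that the pair $(\gamma+1,\gamma)$ fuses with the numerator parameter $a-b-c$ into $1+a-b-c$ — correctly supplies the details the paper leaves implicit.
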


We note that Corollary \ref{2C3P16} generalizes two well-known results of
Whipple and Bailey
given in \cite[Eqs.\ 4.5.1 and 4.5.2]{Ba}. 
Indeed, we have the following:

\begin{enumerate}[label=(\alph*)]

\item Letting $q \to \infty$ in (\ref{1e2C3P16}) gives \cite[Eq.\ 4.5.1]{Ba}
(originally found by Whipple in \cite{Whipple5}).

\item Letting $q=a/2$ in (\ref{1e2C3P16}) gives \cite[Eq.\ 4.5.2]{Ba}
(originally found by Bailey in \cite{Bailey3}).

\end{enumerate}

We next show how Proposition \ref{11P1} along with the result in
Corollary \ref{2C3P16} lead to a formula that generalizes transformations
of both nearly-poised and very-well-poised series to 
Saalsch\"utzian series.

\begin{Proposition}
\label{11P2}
We have
\begin{eqnarray}
\label{1e11P2}
&&{}_9F_8 \left( \left. 
{\displaystyle a,b,c,d,e,a-p+1,p+1,q+1,-n
\atop \displaystyle 1+a-b,1+a-c,1+a-d,1+a-e,p,a-p,q,w}\right| 
1\right)\nonumber\\
&&=\frac{(w-a-1)_n(\alpha+1)_n}{(w)_n(\alpha)_n}\nonumber\\
&&\times \sum_{k=0}^n
\left(
\frac{(-n)_k\left(\frac{a}{2}\right)_k\left(\frac{a+1}{2}\right)_k(1+a-w)_k(1+a-d-e)_k(\beta+1)_k}
{k!(1+a-d)_k(1+a-e)_k\left(\frac{2+a-w-n}{2}\right)_k\left(\frac{3+a-w-n}{2}\right)_k(\beta)_k}\right.\nonumber\\
&&\times \left. {}_5F_4 \left( \left. 
{\displaystyle -k,a-b-c,d,e,\gamma+1
\atop \displaystyle 1+a-b,1+a-c,d+e-a-k,\gamma}\right| 
1\right)\right),
\end{eqnarray} 
where
\begin{equation}
\label{2e11P2}
\alpha=\frac{q(1+a-w)}{a-q},
\end{equation}
\begin{equation}
\label{3e11P2}
\beta=\frac{q(1+a-w)+n(a-q)}{1+2q-w+n}
\end{equation}
and
\begin{equation}
\label{4e11P2}
\gamma=\frac{p(a-p)(b+c-a)}{bc-p(a-p)}.
\end{equation}
\end{Proposition}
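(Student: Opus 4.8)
The plan is to combine Proposition \ref{11P1} with the transformation in Corollary \ref{2C3P16}, following the paper's strategy of inserting a known \emph{transformation} of lower order into a single-sum expansion and then reversing the order of summation. First I would apply Proposition \ref{11P1} with $r=3$, $s=4$, $x=1$ and the assignments $a_1=d$, $a_2=e$, $a_3=q+1$, $b_1=1+a-d$, $b_2=1+a-e$, $b_3=q$, $b_4=w$, so that the left-hand side of (\ref{1e11P2}) becomes a single sum over $m$ whose $m$-th term carries the factor
\begin{equation*}
\frac{\left(\frac{a}{2}\right)_m\left(\frac{a+1}{2}\right)_m(a-b-c)_m(\gamma+1)_m(d)_m(e)_m(q+1)_m(-n)_m(-4)^m}{m!(1+a-b)_m(1+a-c)_m(\gamma)_m(1+a-d)_m(1+a-e)_m(q)_m(w)_m}
\end{equation*}
multiplied by an inner ${}_5F_4$ with numerator parameters $a+2m,d+m,e+m,q+1+m,-n+m$ and denominator parameters $1+a-d+m,1+a-e+m,q+m,w+m$.

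Next I would transform this inner ${}_5F_4$ by Corollary \ref{2C3P16}, applied with the shifted data $\hat a=a+2m$, $\hat b=d+m$, $\hat c=e+m$, $\hat q=q+m$, $\hat w=w+m$, $\hat n=n-m$ in place of $a,b,c,q,w,n$. The payoff of this particular matching is that the auxiliary quantities simplify: a direct computation shows that the shifted $\beta$ equals $\beta+m$ and that the shifted $1+a-b-c$ equals the $m$-free quantity $1+a-d-e$, while the denominator $1+2q-w+n$ of $\beta$ is itself $m$-independent, so the shifted $\alpha$-ratio reorganizes into the clean form $\hat\alpha^{-1}(\hat\alpha+\hat n)=\frac{(1+2q-w+n)(\beta+m)}{(q+m)(1+a+m-w)}$. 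Expanding the resulting ${}_6F_5$ as a sum over a second index $k'$ then yields a double sum over $m$ and $k'$ constrained by $m+k'\le n$.

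I would then reverse the order of summation by setting $k=m+k'$, so that $k$ runs from $0$ to $n$ while, for each fixed $k$, the index $m$ runs from $0$ to $k$. Disentangling the mixed factors uses a handful of standard identities: the Pochhammer shift $(\xi+m)_{k'}=(\xi)_{m+k'}/(\xi)_m$ on factors such as $(1+a+m-d)_{k'}$ and $(1+a+m-e)_{k'}$; the Gauss duplication identities $(\tfrac a2)_m(\tfrac{a+1}2)_m\,4^m=(a)_{2m}$ and $(\tfrac a2+m)_{k'}(\tfrac{a+1}2+m)_{k'}\,4^{k'}=(a+2m)_{2k'}$ to fuse the quadratic factors into $(\tfrac a2)_k(\tfrac{a+1}2)_k$; the collapse $(-n)_m(-n+m)_{k-m}=(-n)_k$; the factorial rewriting $1/(k-m)!=(-1)^m(-k)_m/k!$; and the reflection identity $(1+a-d-e)_{k-m}=(-1)^m(1+a-d-e)_k/(d+e-a-k)_m$. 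After this regrouping the $m$-independent prefactor $\frac{(w-a-1)_n(\alpha+1)_n}{(w)_n(\alpha)_n}$ together with the entire outer $k$-summand of (\ref{1e11P2}) factors out, and the surviving inner sum over $m$ from $0$ to $k$ is exactly the terminating ${}_5F_4$ on the right-hand side of (\ref{1e11P2}): the factorial rewriting produces its parameter $-k$ and the reflection identity produces $d+e-a-k$, which are precisely the two parameters distinguishing it from the ${}_4F_3$ of (\ref{e3R1C1P18}).

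The main obstacle is the bookkeeping in this last step, namely verifying that every $m$-dependent piece — from the coefficient displayed above, from the Corollary \ref{2C3P16} prefactor $\frac{(\hat w-\hat a-1)_{\hat n}(\hat\alpha+1)_{\hat n}}{(\hat w)_{\hat n}(\hat\alpha)_{\hat n}}$ with $\hat n=n-m$, and from the shifted ${}_6F_5$ summand — conspires to leave behind nothing but the clean outer $k$-summand times the ${}_5F_4$ in $m$. The delicate checks are that the $\hat\alpha$-ratio above and the prefactor ratio $(w-a-1-m)_{n-m}/(w+m)_{n-m}$ contribute no residual $m$-dependence beyond what the shift and reflection identities absorb, and that the $(\beta+m)$ surviving in the $\hat\alpha$-ratio combines with $(\hat\beta+1)_{k'}/(\hat\beta)_{k'}=(\beta+m+k')/(\beta+m)$ to yield exactly the factor $(\beta+1)_k/(\beta)_k$ of the outer summand. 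Everything else is routine Pochhammer algebra.
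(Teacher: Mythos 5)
Your proposal is correct and follows essentially the same route as the paper's own proof: specialize Proposition \ref{11P1} with exactly the assignments $a_1=d$, $a_2=e$, $a_3=q+1$, $b_1=1+a-d$, $b_2=1+a-e$, $b_3=q$, $b_4=w$, $x=1$, convert the inner ${}_5F_4$ via Corollary \ref{2C3P16} with the shifted parameters, and then reverse the order of summation along the diagonal $k=m+k'$. Your verification of the key simplifications ($\hat\beta=\beta+m$, the $m$-independence of $1+2q-w+n$ and of $1+\hat a-\hat b-\hat c$, and the Pochhammer/reflection bookkeeping) fills in exactly the ``simplify'' step the paper leaves to the reader.
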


\begin{proof}
Use $d,e,q+1$ for $a_1,a_2,a_3$, respectively,
$1+a-d,1+a-e,q,w$ for $b_1,b_2,b_3,b_4$, respectively, 
and $x=1$ in
(\ref{2e11P1}), and then apply
(\ref{1e2C3P16}) to write the ${}_5F_4$ series on the right-hand side
as a Saalsch\"utzian ${}_6F_5$ series. 
After that reverse the order of summation
and simplify.
\end{proof}

The formula in Proposition \ref{11P2} is a very general one. It extends both
very-well-poised identities as well as nearly-poised identities. In fact,
letting $q \to a/2$ first in (\ref{1e11P2}) and then letting $w \to 1+a+n$ in the resulting
formula yields 
\begin{eqnarray}
\label{1e1C11P2}
&&{}_9F_8 \left( \left. 
{\displaystyle a,1+\frac{a}{2},b,c,d,e,a-p+1,p+1,-n
\atop \displaystyle \frac{a}{2},1+a-b,1+a-c,1+a-d,1+a-e,p,a-p,1+a+n}\right| 
1\right)\nonumber\\
&&=\frac{(1+a)_n(1+a-d-e)_n}{(1+a-d)_n(1+a-e)_n}\nonumber\\
&&\times \left. {}_5F_4 \left( \left. 
{\displaystyle a-b-c,d,e,\gamma+1,-n
\atop \displaystyle 1+a-b,1+a-c,d+e-a-n,\gamma}\right| 
1\right)\right),
\end{eqnarray} 
where
\begin{equation}
\label{2e1C11P3}
\gamma=\frac{p(a-p)(b+c-a)}{bc-p(a-p)},
\end{equation}
which is the very-well-poised ${}_9F_8(1)$ to Saalsch\"utzian ${}_5F_4(1)$
transformation found by Srivastava, Vyas and Fatawat in 
\cite[Theorem 3.2]{SVF} that generalizes 
the classical
Whipple's transformation of
a very-well-poised ${}_7F_6(1)$ series to a Saalsch\"utzian ${}_4F_3(1)$
series (see \cite{Whipple2}, \cite{Whipple4} and \cite[Eq.\ 4.3.4]{Ba}). 
On the other hand, letting $b \to \infty$ in 
(\ref{1e11P2}) 
and then letting $c \to \infty$ in the resulting formula
leads to (\ref{1e3P16}), which greatly generalizes the classical
nearly-poised to Saalsch\"utzian transformations found by Whipple and Bailey
(see \cite[Eqs.\ 4.5.1 and 4.5.2]{Ba}).

\section{Extensions of hypergeometric transformations of 
Saalsch\"utzian to
Saalsch\"utzian series}

In this section, we extend the well-known Whipple transform 
(see \cite{Whipple2}, \cite{Whipple3} and \cite[Eq.\ 7.2.1]{Ba}) 
which involves two terminating Saalsch\"utzian
${}_4F_3(1)$ series. We begin with the following general result:

\begin{Proposition}
\label{12P1}
Let
\begin{equation}
\label{1e12P1}
\gamma
=\frac{p(c-a-1)(c-b-1)}{ab+p(c-a-b-1)}.
\end{equation}
Then
\begin{eqnarray}
\label{2e12P1}
&&{}_{r+4}F_{s+2} \left( \left. 
{\displaystyle a,b,p+1,a_1,\ldots,a_r,-n
\atop \displaystyle c,p,b_1,\ldots,b_s}\right| 
x\right) \nonumber\\
&&=\sum_{m=0}^n \left(
\frac{(c-a-1)_m(c-b-1)_m(\gamma+1)_m(a_1)_m\cdots(a_r)_m(-n)_mx^m}
{m!(c)_m(\gamma)_m(b_1)_m\cdots(b_s)_m}\right.\nonumber\\
&&\times \left. 
{}_{r+2}F_{s} \left( \left. 
{\displaystyle 1+a+b-c,a_1+m,\ldots,a_r+m,-n+m
\atop \displaystyle b_1+m,\ldots,b_s+m}\right| 
x\right) \right).
\end{eqnarray} 
\end{Proposition}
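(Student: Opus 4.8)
The plan is to mirror the proof of Proposition~\ref{11P1}, replacing the role of (\ref{e3R1C1P18}) by its Saalsch\"utzian companion (\ref{e5R1C1P18}), whose free parameter $\gamma_2$ is exactly the $\gamma$ prescribed by (\ref{1e12P1}). First I would expand the left-hand side ${}_{r+4}F_{s+2}$ of (\ref{2e12P1}) as a single terminating sum over an index $k$ running from $0$ to $n$, with general term $\frac{(a)_k(b)_k(p+1)_k(a_1)_k\cdots(a_r)_k(-n)_k x^k}{k!\,(c)_k(p)_k(b_1)_k\cdots(b_s)_k}$. The factors $(a_1)_k\cdots(a_r)_k(-n)_k x^k/[k!\,(b_1)_k\cdots(b_s)_k]$ are inert and will simply be carried along; the crux is the quotient $\frac{(a)_k(b)_k(p+1)_k}{(c)_k(p)_k}$.

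Reading (\ref{e5R1C1P18}) with $n$ replaced by $k$, this quotient equals $(1+a+b-c)_k$ times the terminating ${}_4F_3(1)$ appearing there, whose numerator parameters are $c-a-1,c-b-1,\gamma+1,-k$ and whose denominator parameters are $c,\gamma,c-a-b-k$. Substituting this expression, and then writing that inner ${}_4F_3$ as its defining series over a new index $m$, turns the left-hand side into a double sum over $0\le m\le k\le n$. I would then reverse the order of summation to $\sum_{m=0}^{n}\sum_{k=m}^{n}$ and set $k=m+\ell$, so that the inner sum runs over $\ell$ from $0$ to $n-m$.

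The only real work is the Pochhammer bookkeeping after this reindexing, and it is here that the Saalsch\"utzian shape of (\ref{e5R1C1P18}) pays off. Using $(y)_{m+\ell}=(y)_m(y+m)_\ell$ on every parameter, the elementary identity $(-k)_m/k!=(-1)^m/\ell!$, and the reflection-type collapse $\frac{(1+a+b-c)_k}{(c-a-b-k)_m}=(-1)^m(1+a+b-c)_\ell$, the two factors of $(-1)^m$ cancel, and the $k$-dependent lower parameter $c-a-b-k$ together with the prefactor $(1+a+b-c)_k$ fuse into a single $k$-independent numerator parameter $1+a+b-c$ inside the $\ell$-sum. That $\ell$-sum, which terminates automatically through $(-n+m)_\ell$, is then recognized as the inner ${}_{r+2}F_s$ on the right-hand side of (\ref{2e12P1}), while the leftover $m$-dependent factors assemble into its coefficient $\frac{(c-a-1)_m(c-b-1)_m(\gamma+1)_m(a_1)_m\cdots(a_r)_m(-n)_m x^m}{m!\,(c)_m(\gamma)_m(b_1)_m\cdots(b_s)_m}$. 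I expect the main obstacle to be precisely verifying this collapse of $1+a+b-c$ from a $k$-growing Pochhammer factor to a fixed upper parameter; once that step is secured, the stated identity falls out directly.
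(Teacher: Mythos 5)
Your proposal is correct and follows essentially the same route as the paper: expand the left-hand side, apply (\ref{e5R1C1P18}) with $n$ replaced by the summation index to replace $\frac{(a)_k(b)_k(p+1)_k}{(c)_k(p)_k}$ by $(1+a+b-c)_k$ times a terminating ${}_4F_3(1)$, then write that ${}_4F_3$ as a sum, interchange the order of summation, and simplify. The Pochhammer bookkeeping you describe, including the collapse $\frac{(1+a+b-c)_k}{(c-a-b-k)_m}=(-1)^m(1+a+b-c)_\ell$ cancelling against $(-k)_m/k!=(-1)^m/\ell!$, is exactly the ``simplify'' step the paper leaves implicit, and it is carried out correctly.
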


\begin{proof}
Using (\ref{e5R1C1P18}), we have
\begin{eqnarray*}
&&{}_{r+4}F_{s+2} \left( \left. 
{\displaystyle a,b,p+1,a_1,\ldots,a_r,-n
\atop \displaystyle c,p,b_1,\ldots,b_s}\right| 
x\right) \\
&&=\sum_{k=0}^n 
\frac{(a)_k(b)_k(p+1)_k(a_1)_k\cdots (a_r)_k(-n)_kx^k}
{k!(c)_k(p)_k(b_1)_k\cdots (b_s)_k}\\
&&=\sum_{k=0}^n \left(
\frac{(1+a+b-c)_k(a_1)_k\cdots (a_r)_k(-n)_kx^k}
{k!(b_1)_k\cdots (b_s)_k}\right.\\
&&\left.\times {}_4F_3 \left( \left. 
{\displaystyle c-a-1,c-b-1,\gamma+1,-k
\atop \displaystyle c,\gamma,c-a-b-k}\right| 
1\right)\right),
\end{eqnarray*}
where
\begin{equation*}
\gamma=\frac{p(c-a-1)(c-b-1)}
{ab+p(c-a-b-1)}.
\end{equation*} 

We write the ${}_4F_3$ series on the right-hand side above as
a summation, switch the order of summation in the resulting
expression, and then simplify to obtain (\ref{2e12P1}).
\end{proof}

The extension of the Whipple transform is given next:

\begin{Proposition}
\label{12P2}
We have
\begin{eqnarray}
\label{1e12P2}
&&{}_6F_5 \left( \left. 
{\displaystyle a,b,c,p+1,q+1,-n
\atop \displaystyle d,e,f,p,q}\right| 
1\right)\nonumber\\
&&=\frac{(e-c-1)_n(f-c-1)_n(\alpha+1)_n}{(e)_n(f)_n(\alpha)_n}\nonumber\\
&&\times {}_6F_5 \left( \left. 
{\displaystyle d-a-1,d-b-1,c,\gamma+1,\delta+1,-n
\atop \displaystyle d,2+c-e-n,2+c-f-n,\gamma,\delta}\right| 
1\right),
\end{eqnarray} 
where
\begin{equation}
\label{2e12P2}
d+e+f-a-b-c+n=3,
\end{equation}
\begin{equation}
\label{3e12P2}
\alpha=\frac{q(e-c-1)(f-c-1)}{(c-q)(d-a-b-1)},
\end{equation}
\begin{equation}
\label{4e12P2}
\gamma
=\frac{p(d-a-1)(d-b-1)}{ab+p(d-a-b-1)}
\end{equation}
and
\begin{equation}
\label{5e12P2}
\delta=\frac{q(e-c-1)(f-c-1)+n(c-q)(d-a-b-1)}{(e-c-1)(f-c-1)-(c-q)(d-a-b-1)}.
\end{equation}

\end{Proposition}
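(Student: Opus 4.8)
The plan is to imitate the structure of the proof of Proposition~\ref{11P2}: use the general transformation of Proposition~\ref{12P1} to absorb one of the two deviating pairs into the parameter $\gamma$, then \emph{transform} (rather than sum) the resulting inner series, reverse the order of summation, and finally sum the new inner series by a formula genuinely different from the one underlying Proposition~\ref{12P1}, so that the argument does not merely undo its own first step.

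First I would specialize (\ref{2e12P1}), using $d$ in place of the parameter $c$ there and taking $a_1=c$, $a_2=q+1$, $b_1=e$, $b_2=f$, $b_3=q$ and $x=1$. This recasts the left-hand ${}_6F_5(1)$ of (\ref{1e12P2}) as a single sum over $m$ whose summand carries the factor $(d-a-1)_m(d-b-1)_m(\gamma+1)_m(c)_m(q+1)_m(-n)_m/[m!\,(d)_m(\gamma)_m(e)_m(f)_m(q)_m]$, with $\gamma$ exactly as in (\ref{4e12P2}), together with the inner series
\begin{equation*}
{}_4F_3 \left( \left. {\displaystyle 1+a+b-d,\,c+m,\,q+1+m,\,-n+m \atop \displaystyle e+m,\,f+m,\,q+m}\right| 1\right).
\end{equation*}
Computing its parametric excess shows that the constraint (\ref{2e12P2}) is precisely what makes this inner ${}_4F_3(1)$ Saalsch\"utzian; note that it still carries the residual $q+1+m$ over $q+m$ deviation inherited from the original $q$-pair.

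Next, instead of summing this inner series by (\ref{e1R1C1P18}) --- which would merely reverse Proposition~\ref{12P1} and return the left-hand side --- I would subject it to a transformation of terminating Saalsch\"utzian ${}_4F_3(1)$ series that converts the residual $q$-deviation into the $\delta+1$ over $\delta$ deviation and turns the denominators $e+m,f+m$ into the $2+c-e-n,\,2+c-f-n$ type appearing on the right of (\ref{1e12P2}); this step plays, in the Saalsch\"utzian setting, the role that Corollary~\ref{2C3P16} plays in the proof of Proposition~\ref{11P2}. I would then reverse the order of the resulting double summation, so that the sum over $m$ becomes the inner one for each fixed value of the new outer index. That inner sum over $m$ is once more a Saalsch\"utzian ${}_4F_3(1)$ with a single deviating pair, and summing it by the Rakha--Rathie extension (\ref{e1R1C1P18}) --- now a genuinely fresh application --- yields the prefactor $(e-c-1)_n(f-c-1)_n(\alpha+1)_n/[(e)_n(f)_n(\alpha)_n]$ with $\alpha$ as in (\ref{3e12P2}), leaving exactly the Saalsch\"utzian ${}_6F_5(1)$ on the right of (\ref{1e12P2}). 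As a consistency check on the bookkeeping, one verifies that (\ref{5e12P2}) is equivalent to $\delta=q(\alpha+n)/(\alpha-q)$, which is the shape produced by an extended Chu--Vandermonde/Rakha--Rathie evaluation of the type in (\ref{1e3f2}) and (\ref{e1R1C1P18}).

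The main obstacle is the middle stage: pinning down the correct transformation of the inner Saalsch\"utzian ${}_4F_3(1)$ so that, after the reversal, the surviving inner sum over $m$ is a ${}_4F_3(1)$ to which (\ref{e1R1C1P18}) applies \emph{without} reconstituting the left-hand side. Tracking the $m$-shifts through the reversal, checking that the Saalsch\"utzian relation (\ref{2e12P2}) is preserved at each stage, and confirming that the two summations combine to produce precisely the parameters $\alpha$ and $\delta$ of (\ref{3e12P2}) and (\ref{5e12P2}) is where the computation is delicate; guaranteeing the non-circularity of the second use of (\ref{e1R1C1P18}) is the crucial point to get right.
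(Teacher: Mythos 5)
Your opening step agrees with the paper: apply Proposition \ref{12P1} with $d$ in place of $c$ and with $a_1=c$, $a_2=q+1$, $b_1=e$, $b_2=f$, $b_3=q$, $x=1$, and observe that the constraint (\ref{2e12P2}) makes the inner series
\begin{equation*}
{}_4F_3 \left( \left. {\displaystyle 1+a+b-d,\,c+m,\,q+1+m,\,-n+m \atop \displaystyle e+m,\,f+m,\,q+m}\right| 1\right)
\end{equation*}
Saalsch\"utzian with the single unit-deviating pair $q+1+m$ over $q+m$. But your next move rests on a false premise. Summing this inner series by (\ref{e1R1C1P18}) does \emph{not} reverse the first step and return the left-hand side: Proposition \ref{12P1} was obtained by expanding the block $(a)_k(b)_k(p+1)_k/[(d)_k(p)_k]$ via (\ref{e5R1C1P18}) and interchanging summations, so undoing it would mean re-interchanging and resumming over the outer index $m$; by contrast, summing the inner series is a sum over the residual original index for fixed $m$, and it engages a disjoint block of parameters ($c$, $e$, $f$ and the $q$-pair rather than $a$, $b$, $d$ and the $p$-pair). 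Indeed $f+m$ is exactly the parameter $2+a+b-c-n$ required by (\ref{e1R1C1P18}) (take $p\to q+m$, $n\to n-m$ there), so the inner sum evaluates to a product of Pochhammer ratios in $n-m$, which recombine with the outer factors into the single ${}_6F_5$ on the right of (\ref{1e12P2}). This direct summation is precisely the paper's proof, which is modelled on that of Proposition \ref{3P16} (expand, then sum the inner series in closed form), not on that of Proposition \ref{11P2} (expand, transform, reverse, with no closing summation).

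Because of this misdiagnosis, your actual argument has a hole that you yourself flag: the ``middle stage'' transformation of the inner Saalsch\"utzian ${}_4F_3(1)$ is never identified, and without it the subsequent reversal and second application of (\ref{e1R1C1P18}) cannot be carried out or verified, so the proposal is incomplete as written. Note also that the analogy with Proposition \ref{11P2} is imperfect: there the right-hand side remains a double sum, whereas your plan additionally requires the reversed inner sum to close up into the prefactor, which is an extra demand your sketch does not justify. The repair is simply to delete the middle stage and the reversal and sum the inner series directly by (\ref{e1R1C1P18}); the resulting Pochhammer algebra produces $\alpha$ and $\delta$ as in (\ref{3e12P2}) and (\ref{5e12P2}). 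Your observation that (\ref{5e12P2}) is equivalent to $\delta=q(\alpha+n)/(\alpha-q)$ is correct and serves as a useful check on that simplification.
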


\begin{proof}
Use $d,c,q+1,e,f=3+a+b+c-d-e-n,q,1$ for $c,a_1,a_2,b_1,b_2,b_3,x$, respectively,
in
(\ref{2e12P1}), and then sum the  Saalsch\"utzian
${}_4F_3(1)$ series on the right-hand side according to
(\ref{e1R1C1P18}).
\end{proof}

The relation in (\ref{1e12P2}) involves two Saalsch\"utzian
${}_6F_5(1)$ series in each of which series 
two numerator parameters exceed two 
denominator parameters by one. This relation is a generalization of
the classical Whipple transform (see \cite{Whipple2}, \cite{Whipple3} and \cite[Eq.\ 7.2.1]{Ba}) 
involving two terminating Saalsch\"utzian
${}_4F_3(1)$ series as we show after Corollary \ref{1C12P2} below.

\begin{Remark}
\label{1R12P2}
Let
\begin{eqnarray}
\label{1e1R12P2}
&&\tilde{F}_n(a,b,c;d,e,f;p,q)\\
&&=(d)_n(e)_n(f)_n(\alpha)_n
{}_6F_5 \left( \left. 
{\displaystyle a,b,c,p+1,q+1,-n
\atop \displaystyle d,e,f,p,q}\right| 
1\right),\nonumber
\end{eqnarray}
where
\begin{equation*}
d+e+f-a-b-c+n=3
\end{equation*}
and $\alpha$ is as given in (\ref{3e12P2}).
Then equation (\ref{1e12P2}) implies that
\begin{eqnarray}
\label{2e1R12P2}
&&\tilde{F}_n(a,b,c;d,e,f;p,q)\\
&&=(-1)^n
\tilde{F}_n(d-a-1,d-b-1,c;d,2+c-e-n,2+c-f-n;\gamma,\delta),\nonumber
\end{eqnarray}
where $\gamma$ and $\delta$ are as given in
(\ref{4e12P2}) and (\ref{5e12P2}), respectively.
\end{Remark}

\begin{Corollary}
\label{1C12P2}
We have
\begin{eqnarray}
\label{1e1C12P2}
&&{}_5F_4 \left( \left. 
{\displaystyle a,b,c,p+1,-n
\atop \displaystyle d,e,f,p}\right| 
1\right)\nonumber\\
&&=\frac{(e-c)_n(f-c)_n}{(e)_n(f)_n}\nonumber\\
&&\times {}_5F_4 \left( \left. 
{\displaystyle d-a-1,d-b-1,c,\gamma+1,-n
\atop \displaystyle d,1+c-e-n,1+c-f-n,\gamma}\right| 
1\right),
\end{eqnarray} 
where
\begin{equation}
\label{2e1C12P2}
d+e+f-a-b-c+n=2,
\end{equation}
and
\begin{equation}
\label{3e1C12P2}
\gamma
=\frac{p(d-a-1)(d-b-1)}{ab+p(d-a-b-1)}.
\end{equation}
\end{Corollary}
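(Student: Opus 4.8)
The plan is to prove Corollary \ref{1C12P2} in direct parallel with the proof of Proposition \ref{12P2}, but using the Pfaff--Saalsch\"utz formula in place of the Rakha--Rathie summation (\ref{e1R1C1P18}); this is the natural companion one order lower, since the corollary carries a single extra pair of parameters $p+1,p$ rather than the two pairs $p+1,p$ and $q+1,q$ of Proposition \ref{12P2}. First I would apply Proposition \ref{12P1} with $r=1$ and $s=2$, taking the parameter denoted $c$ in (\ref{2e12P1}) to be $d$ and setting $a_1=c$, $b_1=e$, $b_2=f$, and $x=1$. With $\gamma$ as in (\ref{3e1C12P2}) (which is exactly (\ref{1e12P1}) under this substitution), formula (\ref{2e12P1}) expresses the left-hand side of (\ref{1e1C12P2}) as
\begin{equation*}
\sum_{m=0}^{n}
\frac{(d-a-1)_m(d-b-1)_m(\gamma+1)_m(c)_m(-n)_m}{m!\,(d)_m(\gamma)_m(e)_m(f)_m}\,
{}_3F_2\left(\left.{\displaystyle 1+a+b-d,\,c+m,\,-n+m\atop \displaystyle e+m,\,f+m}\right|1\right).
\end{equation*}

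The second step is to sum the inner ${}_3F_2$ in closed form. Under the hypothesis (\ref{2e1C12P2}), namely $d+e+f-a-b-c+n=2$, a short computation of the parametric excess shows that this ${}_3F_2$ is Saalsch\"utzian for every $m$: the two denominator parameters $e+m$ and $f+m$ sum to $1+(1+a+b-d)+(c+m)-(n-m)=e+f+2m$, which is exactly the Saalsch\"utzian balance for the terminating parameter $-(n-m)$. Hence I would evaluate it by the Pfaff--Saalsch\"utz formula (the case $p=b$ of (\ref{e1R1C1P18})), producing a ratio of Pochhammer symbols of the form $(\cdot)_{n-m}$ in the parameters $1+c-f-n+m$, $e-c$, $e+m$, and $1-f-n$. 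Substituting this back leaves a single sum over $m$.

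The final, and most delicate, step is to reindex this single sum into the ${}_5F_4$ on the right-hand side of (\ref{1e1C12P2}). The key is to separate the $m$- and $n$-dependence of the symbols $(\cdot)_{n-m}$ using the reflection identities $(x)_{n-m}=(-1)^m(x)_n/(1-x-n)_m$ and $(e+m)_{n-m}=(e)_n/(e)_m$, together with $(x)_n=(-1)^n(1-x-n)_n$. After these substitutions the signs $(-1)^m$ cancel, the factors $(e)_m$ and $(f)_m$ clear against the corresponding factors in the prefactor from Proposition \ref{12P1}, and the surviving $m$-dependent denominators assemble precisely into $(1+c-e-n)_m$ and $(1+c-f-n)_m$, which are the new denominator parameters on the right of (\ref{1e1C12P2}). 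The $m$-free remainder collapses, via $(1+c-f-n)_n=(-1)^n(f-c)_n$ and $(1-f-n)_n=(-1)^n(f)_n$, to the prefactor $(e-c)_n(f-c)_n/[(e)_n(f)_n]$.

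I expect the main obstacle to be this last bookkeeping: one must track the reflected arguments carefully (note in particular the $+m$ shift inside $(1+c-f-n+m)_{n-m}$ coming from $C-A$ when the denominator parameter $C=e+m$ is used in Pfaff--Saalsch\"utz), and confirm that the two free Pochhammer ratios reduce exactly to $(e-c)_n(f-c)_n/[(e)_n(f)_n]$ with no residual sign or shift. A useful consistency check is that the entire computation is symmetric under interchanging $e$ and $f$ (equivalently, choosing the other denominator parameter as $C$ in Pfaff--Saalsch\"utz), which it must be since both sides of (\ref{1e1C12P2}) are symmetric in $e$ and $f$.
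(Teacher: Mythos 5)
Your proof is correct, but it takes a genuinely different route from the paper's. The paper obtains Corollary \ref{1C12P2} in one line by confluence from the already-established Proposition \ref{12P2}: let $q \to c$ in (\ref{1e12P2}), note that $\alpha \to \infty$ (so $(\alpha+1)_n/(\alpha)_n \to 1$) and $\delta \to c$ (so the pair $\delta+1,\delta$ merges with the numerator parameter $c$ to produce $c+1$), and then relabel $c+1$ as $c$. You instead rederive the identity from scratch by mirroring the proof of Proposition \ref{12P2} one order lower: apply Proposition \ref{12P1} with $a_1=c$, $b_1=e$, $b_2=f$, $x=1$, observe that the inner ${}_3F_2$ is Saalsch\"utzian under (\ref{2e1C12P2}), sum it by Pfaff--Saalsch\"utz, and reassemble. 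I checked your bookkeeping: with $C=e+m$ one gets $C-A=1+c-f-n+m$, $C-B=e-c$, $C-A-B=1-f-n$; the identities $(e+m)_{n-m}=(e)_n/(e)_m$, $(x)_{n-m}=(-1)^m(x)_n/(1-x-n)_m$, and $(x+m)_{n-m}=(x)_n/(x)_m$ do make the signs cancel, clear $(e)_m$ and $(f)_m$, produce the denominators $(1+c-e-n)_m$ and $(1+c-f-n)_m$, and collapse the $m$-free part to $(e-c)_n(f-c)_n/[(e)_n(f)_n]$, exactly as you predict. What each approach buys: the paper's is shorter but requires verifying the limiting behavior of $\alpha$ and $\delta$ and the shift of $c$; yours is self-contained, avoids Proposition \ref{12P2} entirely, and needs only the classical Pfaff--Saalsch\"utz theorem for the inner summation (the Rakha--Rathie extension entering only through Proposition \ref{12P1} itself), at the cost of redoing the Pochhammer manipulations.
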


\begin{proof}
Let $q \to c$
in
(\ref{1e12P2}) 
and then replace $c+1$ with $c$.
\end{proof}

The two ${}_5F_4(1)$ series in (\ref{1e1C12P2}) are both  
Saalsch\"utzian and in each one of them a numerator parameter exceeds a denominator
parameter by one. Letting $p=b$ in (\ref{1e1C12P2}) gives 
the Whipple transform involving two Saalsch\"utzian
${}_4F_3(1)$ series.

\begin{Corollary}
\label{2C12P2}
We have
\begin{eqnarray}
\label{1e2C12P2}
&&{}_4F_3 \left( \left. 
{\displaystyle a,c,p+1,-n
\atop \displaystyle d,e,p}\right| 
1\right)\\
&&=\frac{(e-c)_n}{(e)_n}
{}_4F_3 \left( \left. 
{\displaystyle d-a-1,c,\gamma+1,-n
\atop \displaystyle d,1+c-e-n,\gamma}\right| 
1\right),\nonumber
\end{eqnarray} 
where
\begin{equation}
\label{2e2C12P2}
\gamma
=\frac{p(d-a-1)}{p-a}.
\end{equation}
\end{Corollary}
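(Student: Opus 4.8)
The plan is to obtain Corollary \ref{2C12P2} as the limiting case $b \to \infty$ of Corollary \ref{1C12P2}. First I would fix the parameters $a,c,d,e,p$ and $n$ that are to appear in (\ref{1e2C12P2}) and, for each value of $b$, define $f$ through the balancing condition (\ref{2e1C12P2}), so that $f = b + (2+a+c-d-e-n)$. In this way $f \to \infty$ in lockstep with $b$, with the difference $f-b$ held fixed, and every hypothesis of Corollary \ref{1C12P2} stays satisfied along the way. Note that (\ref{2e1C12P2}) here only serves to define $f$ in terms of $b$ and places no restriction on the surviving parameters, which is exactly why no balancing condition survives in Corollary \ref{2C12P2}.

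Next I would pass to the limit in each piece of (\ref{1e1C12P2}). On the left-hand side the only $b$- and $f$-dependent factor in the $k$th term of the ${}_5F_4$ is $(b)_k/(f)_k$, which tends to $1$ since $(b)_k \sim b^k \sim (f)_k$; hence the left-hand side converges termwise to the ${}_4F_3$ on the left of (\ref{1e2C12P2}). In the prefactor $\frac{(e-c)_n(f-c)_n}{(e)_n(f)_n}$ the ratio $(f-c)_n/(f)_n \to 1$, leaving $\frac{(e-c)_n}{(e)_n}$. In the ${}_5F_4$ on the right, the parameters $d-b-1$ and $1+c-f-n$ both tend to $-\infty$; using the chosen relation between $f$ and $b$ one checks that $1+c-f-n = d+e-a-b-1$, so these two diverging Pochhammer symbols pair off and $(d-b-1)_k/(1+c-f-n)_k \to 1$ for every $k$. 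The remaining parameters $d-a-1$, $c$, $d$, $1+c-e-n$, and $-n$ are independent of $b$, and $1+c-e-n$ is precisely the denominator parameter appearing on the right of (\ref{1e2C12P2}).

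The delicate point, and the step I expect to be the main obstacle, is the behaviour of $\gamma$ as defined in (\ref{3e1C12P2}), namely $\gamma = \frac{p(d-a-1)(d-b-1)}{ab+p(d-a-b-1)}$. Here the factor $(d-b-1)$ in the numerator is essential: both numerator and denominator grow linearly in $b$, with leading coefficients $-p(d-a-1)$ and $a-p$ respectively, so that $\gamma$ converges to the finite, nonzero value $\frac{p(d-a-1)}{p-a}$, which is exactly the $\gamma$ of (\ref{2e2C12P2}). Were the $(d-b-1)$ factor absent, $\gamma$ would tend to $0$ and the ratio $(\gamma+1)_k/(\gamma)_k$ would diverge; because it is present, $(\gamma+1)_k/(\gamma)_k$ tends to a finite limit and the right-hand ${}_5F_4$ converges termwise to the ${}_4F_3$ on the right of (\ref{1e2C12P2}). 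Assembling these limits yields Corollary \ref{2C12P2}; the only technical point requiring care is the interchange of the limit in $b$ with the sum over $k$, which is immediate since every series terminates at $k=n$.
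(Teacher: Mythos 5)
Your proposal is correct and follows exactly the paper's own route: fix $a,c,d,e,p,n$, let $f=2+a+b+c-d-e-n$ depend on $b$, and send $b\to\infty$ in Corollary \ref{1C12P2}. The paper states this in two lines; your verification of the termwise limits (the pairing $1+c-f-n=d+e-a-b-1$ with $d-b-1$, and the limit $\gamma\to p(d-a-1)/(p-a)$) correctly fills in the details it leaves implicit.
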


\begin{proof}
In (\ref{1e1C12P2}), fix $a,c,d,e,p$ and $n$, and let
\begin{equation*}
f=2+a+b+c-d-e-n
\end{equation*}
depend on $b$. Let $b \to \infty$ to obtain the result.
\end{proof}

We note that (\ref{1e2C12P2}) generalizes the classical relation 
involving two
terminating ${}_3F_2(1)$ series 
(see Sheppard \cite{Sheppard} and Whipple \cite{Whipple1} which follow
Thomae \cite{T}). 
Indeed, letting $p \to \infty$ in 
(\ref{1e2C12P2}) gives 
\begin{eqnarray}
\label{3e2C12P2}
&&{}_3F_2 \left( \left. 
{\displaystyle a,c,-n
\atop \displaystyle d,e}\right| 
1\right)\\
&&=\frac{(e-c)_n}{(e)_n}
{}_3F_2 \left( \left. 
{\displaystyle d-a,c,-n
\atop \displaystyle d,1+c-e-n}\right| 
1\right).\nonumber
\end{eqnarray} 

Equation (\ref{1e2C12P2}) also extends the Saalsch\"utzian
${}_4F_3(1)$ summation (\ref{e1R1C1P18}). In fact, (\ref{e1R1C1P18})
follows from (\ref{1e2C12P2}) by setting $e=2+a+c-d-n$ and then
summing the resulting ${}_3F_2(1)$ series on the right-hand side
according to (\ref{1e3f2}).

\section{Extensions of hypergeometric
transformations of
very-well-poised 
and nearly-poised series to very-well-poised
series}

In this section, we extend the relation between two 
terminating very-well-poised ${}_{11}F_{10}(1)$ series
given by Srivastava, Vyas and Fatawat in 
\cite[Theorem 3.4]{SVF} (which generalizes Bailey's
${}_9F_8$ transformation in \cite[Eq.\ 4.3.7]{Ba})
to a relation between two
terminating very-well-poised ${}_{13}F_{12}(1)$ series.
We also extend the formulas found in
\cite[Eqs.\ 4.5.3, 4.5.4 and 4.5.5]{Ba}.
We begin with a general formula that extends
\cite[Eq.\ 4.3.6]{Ba}:

\begin{Proposition}
\label{13P1}
If 
\begin{equation}
\label{1e13P1}
\lambda=2a-b-c-d, 
\end{equation}
then
\begin{eqnarray}
\label{2e13P1}
&&{}_{r+7}F_{s+5} \left( \left. 
{\displaystyle a,b,c,d,a-p+1,p+1,a_1,\ldots,a_r,-n
\atop \displaystyle 1+a-b,1+a-c,1+a-d,p,a-p,b_1,\ldots,b_s}\right| 
x\right) \nonumber\\
&&=\sum_{m=0}^n \left(
\frac{(\lambda)_m(\lambda+b-a)_m(\lambda+c-a)_m(\lambda+d-a)_m\left(\frac{a}{2}\right)_m\left(\frac{a+1}{2}\right)_m}
{m!\left(\frac{\lambda}{2}\right)_m\left(\frac{\lambda+1}{2}\right)_m(1+a-b)_m(1+a-c)_m(1+a-d)_m}\right.\nonumber\\
&&\times
\frac{\left(\frac{\lambda}{2}-\gamma+1\right)_m\left(\frac{\lambda}{2}+\gamma+1\right)_m(a_1)_m\cdots(a_r)_m(-n)_mx^m}
{\left(\frac{\lambda}{2}+\gamma\right)_m\left(\frac{\lambda}{2}-\gamma\right)_m(b_1)_m\cdots(b_s)_m}\\
&&\times \left. 
{}_{r+3}F_{s+1} \left( \left. 
{\displaystyle a+2m,a-\lambda,a_1+m,\ldots,a_r+m,-n+m
\atop \displaystyle 1+\lambda+2m,b_1+m,\ldots,b_s+m}\right| 
x\right) \right),\nonumber
\end{eqnarray} 
where
\begin{equation}
\label{3e13P1}
\gamma^2
=\frac{\lambda^2}{4}
-\frac{p(a-p)(a-b-c)(a-b-d)(a-c-d)}
{bcd+p(a-p)(a-b-c-d)}.
\end{equation}
\end{Proposition}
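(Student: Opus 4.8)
The plan is to follow the pattern of the proofs of Propositions~\ref{11P1} and~\ref{12P1}, replacing the Rakha--Rathie ${}_4F_3(1)$ evaluation by the Srivastava--Vyas--Fatawat summation (\ref{1e2R3C11P2}) as the engine of the argument. First I would expand the left-hand ${}_{r+7}F_{s+5}$ series termwise, its general term over $k$ being
\[
\frac{(a)_k(b)_k(c)_k(d)_k(a-p+1)_k(p+1)_k(a_1)_k\cdots(a_r)_k(-n)_k\,x^k}{k!\,(1+a-b)_k(1+a-c)_k(1+a-d)_k(p)_k(a-p)_k(b_1)_k\cdots(b_s)_k}.
\]
The idea is to isolate the factor
\[
\frac{(b)_k(c)_k(d)_k(a-p+1)_k(p+1)_k}{(1+a-b)_k(1+a-c)_k(1+a-d)_k(p)_k(a-p)_k},
\]
which is exactly the right-hand side of (\ref{1e2R3C11P2}), with $n$ replaced by $k$, apart from the ratio $(a-\lambda)_k/(1+\lambda)_k$. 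Reading (\ref{1e2R3C11P2}) from right to left therefore lets me rewrite this factor as $\tfrac{(a-\lambda)_k}{(1+\lambda)_k}$ times the very-well-poised ${}_9F_8(1)$ on the left of (\ref{1e2R3C11P2}) with $n$ replaced by $k$. It is essential here that I use the form (\ref{1e2R3C11P2}): its parameter $\gamma$, given by (\ref{3e13P1}), is independent of $n$, so that after the substitution $n\mapsto k$ the factors $(\tfrac{\lambda}{2}\pm\gamma+1)_m$ and $(\tfrac{\lambda}{2}\pm\gamma)_m$ remain constants that can later be absorbed into the $m$-coefficient.

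Second, I would write this ${}_9F_8(1)$ as its defining sum over $m$, obtaining a double sum over $k$ and $m$, and then interchange the order of summation so that $m$ runs from $0$ to $n$ and $k$ from $m$ to $n$. Setting $k=m+j$, the whole task reduces to repackaging Pochhammer symbols. The governing identities are $(a)_k(a+k)_m=(a)_{k+m}$ together with the duplication formula $(a)_{2m}=4^m(\tfrac{a}{2})_m(\tfrac{a+1}{2})_m$, applied both to $(a)_{2m+j}$ (which produces the coefficient factors $(\tfrac{a}{2})_m(\tfrac{a+1}{2})_m$ and the inner numerator parameter $a+2m$) and to $(1+\lambda)_{2m+j}$ (which produces $(\tfrac{\lambda+1}{2})_m(\tfrac{\lambda}{2}+1)_m$ in the denominator and the inner denominator parameter $1+\lambda+2m$); the reflection $(1+\lambda-a-k)_m=(-1)^m(a-\lambda+k-m)_m$, which combines with $(a-\lambda)_k$ to give $(-1)^m(a-\lambda)_j$ and hence the inner numerator parameter $a-\lambda$; and the routine splittings $(a_i)_k=(a_i)_m(a_i+m)_j$, $(b_i)_k=(b_i)_m(b_i+m)_j$, and $(-n)_k=(-n)_m(-n+m)_j$, which distribute between the $m$-coefficient and the inner ${}_{r+3}F_{s+1}(x)$ series in $j$.

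The step I expect to be the main obstacle is the bookkeeping in this repackaging, where the delicate point is that several spurious factors must cancel. One must check that the very-well-poised quotient $(1+\tfrac{\lambda}{2})_m/(\tfrac{\lambda}{2})_m$ carried by the ${}_9F_8(1)$ combines with the $(\tfrac{\lambda}{2}+1)_m$ thrown off by $(1+\lambda)_{2m}$ so that only $(\tfrac{\lambda}{2})_m(\tfrac{\lambda+1}{2})_m$ survives in the denominator; that the two powers of $4^m$ coming from $(a)_{2m}$ and $(1+\lambda)_{2m}$ cancel; and that the two factors of $(-1)^m$, one from the reflection of $(1+\lambda-a-k)_m$ and one from $(-k)_m/k!=(-1)^m/j!$, cancel, so that a clean $x^m$ with no residual sign or power of $4$ remains. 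Once these cancellations are verified, the factors depending on $m$ assemble precisely into the coefficient displayed in (\ref{2e13P1}), while the sum over $j$ from $0$ to $n-m$ assembles into the inner ${}_{r+3}F_{s+1}(x)$ series there, which completes the proof.
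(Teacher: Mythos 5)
Your proposal is correct and follows essentially the same route as the paper's proof: expand the left-hand side termwise, use (\ref{1e2R3C11P2}) read from right to left (with $n\mapsto k$) to replace the well-poised/$p$-dependent factor by $(a-\lambda)_k/(1+\lambda)_k$ times the ${}_9F_8(1)$, then expand that series, interchange the order of summation, and simplify. Your observations about why the $\gamma$-form (\ref{1e2R3C11P2}) must be used ($\gamma$ independent of $n$) and about the cancellation of the $4^m$ and $(-1)^m$ factors are exactly the details hidden in the paper's word ``simplify,'' and they check out.
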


\begin{proof}
Using (\ref{1e2R3C11P2}), we have
\begin{eqnarray*}
&&{}_{r+7}F_{s+5} \left( \left. 
{\displaystyle a,b,c,d,a-p+1,p+1,a_1,\ldots,a_r,-n
\atop \displaystyle 1+a-b,1+a-c,1+a-d,p,a-p,b_1,\ldots,b_s}\right| 
x\right) \\
&&=\sum_{k=0}^n 
\frac{(a)_k(b)_k(c)_k(d)_k(a-p+1)_k(p+1)_k}
{k!(1+a-b)_k(1+a-c)_k(1+a-d)_k(p)_k(a-p)_k}\\
&&\times
\frac{(a_1)_k\cdots (a_r)_k(-n)_kx^k}
{(b_1)_k\cdots (b_s)_k}\\
&&=\sum_{k=0}^n \left(
\frac{(a)_k(a-\lambda)_k(a_1)_k\cdots (a_r)_k(-n)_kx^k}
{k!(1+\lambda)_k(b_1)_k\cdots (b_s)_k}\right.\\
&&{}_9F_8 \left(  
{\lambda,1+\frac{\lambda}{2},\lambda+b-a,\lambda+c-a,\lambda+d-a,
\atop 
\frac{\lambda}{2},1+a-b,1+a-c,1+a-d,}
\right.\\
&&\left. \left. \left.
{a+k,\frac{\lambda}{2}-\gamma+1,\frac{\lambda}{2}+\gamma+1,-k;
\atop 
1+\lambda-a-k,\frac{\lambda}{2}+\gamma,\frac{\lambda}{2}-\gamma,1+\lambda+k;}
\right| 1\right)\right),
\end{eqnarray*}
where
\begin{equation*}
\lambda=2a-b-c-d
\end{equation*}
and
\begin{equation*}
\gamma^2
=\frac{\lambda^2}{4}
-\frac{p(a-p)(a-b-c)(a-b-d)(a-c-d)}
{bcd+p(a-p)(a-b-c-d)}.
\end{equation*} 

We write the ${}_9F_8$ series on the right-hand side above as
a summation, switch the order of summation in the resulting
expression, and then simplify to obtain (\ref{2e13P1}).
\end{proof}

\begin{Remark}
\label{1R13P1}
Formula (\ref{2e13P1}) is an extension of 
\cite[Eq.\ 4.3.6]{Ba}. In fact, \cite[Eq.\ 4.3.6]{Ba}
follows from (\ref{2e13P1}) by letting $x=1$ and $p=d$.
\end{Remark}

We now obtain the generalization of 
Srivastava, Vyas and Fatawat's ${}_{11}F_{10}$
transformation given in \cite[Theorem 3.4]{SVF}:

\begin{Proposition}
\label{13P3}
Suppose
\begin{equation}
\label{1e13P3}
3a=b+c+d+e+f+g-n. 
\end{equation}
Then
\begin{eqnarray}
\label{2e13P3}
&&{}_{13}F_{12} \left( 
{\displaystyle a,1+\frac{a}{2},b,c,d,e,f,
\atop 
\displaystyle \frac{a}{2},1+a-b,1+a-c,1+a-d,1+a-e,1+a-f,}
\right.\nonumber\\
&& \left.\left.
{\displaystyle g,a-p+1,p+1,a-q+1,q+1,-n
\atop 
\displaystyle 1+a-g,p,a-p,q,a-q,1+a+n}
\right| 
1\right)\\
&&=\frac{(1+a)_n(1+\lambda-e)_n(1+\lambda-f)_n(1+\lambda-g)_n}
{(1+\lambda)_n(1+a-e)_n(1+a-f)_n(1+a-g)_n}\nonumber\\
&&\times \frac{\left(\frac{\mu}{2}-\delta+1\right)_n\left(\frac{\mu}{2}+\delta+1\right)_n}
{\left(\frac{\mu}{2}+\delta\right)_n\left(\frac{\mu}{2}-\delta\right)_n}\nonumber\\
&&\times {}_{13}F_{12} \left( 
{\displaystyle \lambda,1+\frac{\lambda}{2},\lambda+b-a,\lambda+c-a,\lambda+d-a,e,f,
\atop 
\displaystyle \frac{\lambda}{2},1+a-b,1+a-c,1+a-d,1+\lambda-e,1+\lambda-f,}
\right.\nonumber\\
&& \left.\left.
{\displaystyle g,\frac{\lambda}{2}-\gamma+1,\frac{\lambda}{2}+\gamma+1,
\frac{\lambda}{2}-\epsilon+1,\frac{\lambda}{2}+\epsilon+1,-n
\atop 
\displaystyle 1+\lambda-g,\frac{\lambda}{2}+\gamma,\frac{\lambda}{2}-\gamma,
\frac{\lambda}{2}+\epsilon,\frac{\lambda}{2}-\epsilon,1+\lambda+n}
\right| 
1\right),\nonumber
\end{eqnarray} 
where
\begin{equation}
\label{3e13P3}
\lambda=2a-b-c-d,
\end{equation}
\begin{equation}
\label{4e13P3}
\mu=2a-e-f-g,
\end{equation}
\begin{equation}
\label{5e13P3}
\gamma^2
=\frac{\lambda^2}{4}
-\frac{p(a-p)(a-b-c)(a-b-d)(a-c-d)}
{bcd+p(a-p)(a-b-c-d)},
\end{equation}
\begin{equation}
\label{6e13P3}
\delta^2
=\frac{\mu^2}{4}
-\frac{q(a-q)(a-e-f)(a-e-g)(a-f-g)}
{efg+q(a-q)(a-e-f-g)}
\end{equation}
and
\begin{eqnarray}
\label{7e13P3}
&&\epsilon^2
=\frac{\lambda^2}{4}\\
&&-\,\frac{\left[\displaystyle{q(a-q)(a-e-f)(a-e-g)(a-f-g) \atop +\,n(\mu+n)(efg+q(a-q)(a-e-f-g))}\right]}
{\left[\displaystyle{(a-e-f)(a-e-g)(a-f-g) \atop -\,(\mu+n)(ef+eg+fg+a(a-e-f-g)-q(a-q))}\right]}.\nonumber
\end{eqnarray}
\end{Proposition}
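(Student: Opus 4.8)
The plan is to obtain (\ref{2e13P3}) as a specialization of the general formula in Proposition \ref{13P1} followed by a single application of the Srivastava--Vyas--Fatawat ${}_9F_8$ summation (\ref{1e2R3C11P2}), exactly as Proposition \ref{3P16} was deduced from Proposition \ref{11P1} together with (\ref{1e3f2}). Concretely, I would set $r=6$ and $s=7$ in (\ref{2e13P1}), take $x=1$, and choose the free numerator parameters $\bigl(1+\tfrac{a}{2},\,e,\,f,\,g,\,a-q+1,\,q+1\bigr)$ for $(a_1,\dots,a_6)$ and the free denominator parameters $\bigl(\tfrac{a}{2},\,1+a-e,\,1+a-f,\,1+a-g,\,q,\,a-q,\,1+a+n\bigr)$ for $(b_1,\dots,b_7)$. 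With these choices the left-hand side of (\ref{2e13P1}) becomes precisely the ${}_{13}F_{12}(1)$ on the left of (\ref{2e13P3}), while the inner ${}_{r+3}F_{s+1}={}_9F_8$ acquires leading parameter $a+2m$, second numerator entry $1+\tfrac{a+2m}{2}$ and denominator entry $\tfrac{a+2m}{2}$, together with the seven pairs $\{e+m,1+a-e+m\}$, $\{f+m,1+a-f+m\}$, $\{g+m,1+a-g+m\}$, $\{a-q+1+m,q+m\}$, $\{q+1+m,a-q+m\}$, $\{-n+m,1+a+n+m\}$ and $\{a-\lambda,1+\lambda+2m\}$, each summing to $1+(a+2m)$; hence the inner series is very-well-poised.

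The next step is to verify that this inner ${}_9F_8$ is summable by (\ref{1e2R3C11P2}). A parameter count shows its parametric excess equals $6a+2-2(b+c+d+e+f+g)+2n$, which by the hypothesis (\ref{1e13P3}) $b+c+d+e+f+g=3a+n$ collapses to $2$, exactly the excess required by (\ref{1e2R3C11P2}). I would then read off the matching: the terminating pair forces $n_S=n-m$; the pair $\{a-\lambda,1+\lambda+2m\}$ plays the role of the ``$a_S+n_S$'' pair and, using $b+c+d=a+\mu+n$ with $\mu=2a-e-f-g$, gives the clean values $a_S=\mu+m$, $b_S=a-f-g$, $c_S=a-e-g$, $d_S=a-e-f$; and the two $q$-pairs are the two ``$\gamma$-pairs'', yielding $\gamma_S=\tfrac{a}{2}-q$. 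This is where the parameter $\mu$ of (\ref{4e13P3}) enters, and it also explains why $\delta$ in (\ref{6e13P3}) is exactly the Srivastava--Vyas--Fatawat $\gamma$ of (\ref{3e2R3C11P2}) attached to the system $(a;e,f,g;q)$.

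Summing the inner series by (\ref{1e2R3C11P2}) replaces it by a ratio $R_m$ of Pochhammer symbols of argument $n-m$. The key simplification is that the $p_S$-factor $\frac{(a_S-p_S+1)_{n-m}(p_S+1)_{n-m}}{(p_S)_{n-m}(a_S-p_S)_{n-m}}$ telescopes, via $\frac{(y+1)_{k}}{(y)_{k}}=\frac{y+k}{y}$, to $\frac{(n-m)(\mu+n)+P_S}{P_S}$, where $P_S=p_S(a_S-p_S)$ is the symmetric function fixed by $\gamma_S$ through (\ref{3e2R3C11P2}). At $m=0$ one finds $P_S=\frac{\mu^2}{4}-\delta^2$, and this factor reproduces exactly the prefactor $\frac{(\frac\mu2-\delta+1)_n(\frac\mu2+\delta+1)_n}{(\frac\mu2+\delta)_n(\frac\mu2-\delta)_n}$; the remaining $m=0$ symbols reproduce the first prefactor of (\ref{2e13P3}) after the reflections $(x+m)_{n-m}=\frac{(x)_n}{(x)_m}$ and the duplication $(1+a+2m)_{n-m}=\frac{(1+a)_n\,(1+a+n)_m}{4^m\,(\frac{a+1}{2})_m\,(1+\frac{a}{2})_m}$. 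Carrying the same reflections through for general $m$ and combining them with the outer coefficient of (\ref{2e13P1}) should then reproduce term-by-term the summand $t_m$ of the very-well-poised ${}_{13}F_{12}(1)$ on the right of (\ref{2e13P3}), the parameter $\epsilon$ of (\ref{7e13P3}) being precisely what packages the residual $m$-dependence into the extra pair $\{\tfrac{\lambda}{2}\pm\epsilon\}$.

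I expect the main obstacle to be this last bookkeeping: showing that $C_mR_m$, where $C_m$ denotes the outer coefficient in (\ref{2e13P1}), equals the prefactor times $t_m$. The difficulty is concentrated in the $q$-data, because $a_S=\mu+m$ and $P_S=P_S(m)$ both vary with $m$, so the $p_S$-factor cannot be reflected by the naive rules for a fixed base; one must instead track how $\frac{(n-m)(\mu+n)+P_S(m)}{P_S(m)}$ interacts with the reflected ``free'' symbols $(b_S)_{n-m},(1+a_S-b_S)_{n-m},\dots$ to manufacture the pair $\{\tfrac{\lambda}{2}\pm\epsilon\}$. Verifying that this parameter is given in closed form by the intricate expression (\ref{7e13P3}) — rather than merely existing — is the real content, and I would organize it by comparing the term ratios $\frac{C_{m+1}R_{m+1}}{C_mR_m}$ and $\frac{t_{m+1}}{t_m}$ as rational functions of $m$, using the already-verified normalization at $m=0$ to conclude equality.
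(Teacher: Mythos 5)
Your proposal follows exactly the paper's own route: the identical substitution $a_1,\dots,a_6=1+\tfrac{a}{2},e,f,g,a-q+1,q+1$ and $b_1,\dots,b_7=\tfrac{a}{2},1+a-e,1+a-f,1+a-g,q,a-q,1+a+n$ with $x=1$ in Proposition \ref{13P1}, followed by summing the inner very-well-poised ${}_9F_8(1)$ via the Srivastava--Vyas--Fatawat summation (you invoke it in the form (\ref{1e2R3C11P2}) rather than (\ref{1e1R3C11P2}), but these are the same identity). Your identification of the parameters (in particular that the constraint (\ref{1e13P3}) forces the inner series to have parametric excess $2$ with $a-\lambda=\mu+n$ as the required special numerator entry) is correct, and your outline of the final simplification is a more detailed account of what the paper dismisses as ``some simplification.''
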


\begin{proof}
Use $1+\frac{a}{2},e,f,g,a-q+1,q+1$ for $a_1,a_2,a_3,a_4,a_5,a_6$, respectively, 
$\frac{a}{2},1+a-e,1+a-f,1+a-g,q,a-q,1+a+n$ for 
$b_1,b_2,b_3,b_4,b_5,b_6,b_7$, respectively, 
and $x=1$ in
(\ref{2e13P1}), and then apply
(\ref{1e1R3C11P2}) to sum the ${}_9F_8(1)$ 
series on the right-hand side.
The result follows after some simplification.
\end{proof}

Equation (\ref{2e13P3}) above involves two 
terminating very-well-poised ${}_{13}F_{12}(1)$ series.
It generalizes the result of Srivastava, Vyas and Fatawat \cite[Theorem 3.4]{SVF} between two
terminating very-well-poised ${}_{11}F_{10}(1)$ series, which in turn is a generalization
of Bailey's ${}_9F_8$ transformation 
(see \cite[Eq.\ 4.3.7]{Ba}). Indeed, \cite[Theorem 3.4]{SVF} follows from 
our result (\ref{2e13P3}) upon setting $q=e$.

We next extend  the formulas found in
\cite[Eqs.\ 4.5.3, 4.5.4 and 4.5.5]{Ba}.
First, we obtain an even more general result:

\begin{Proposition}
\label{13P4}
We have
\begin{eqnarray}
\label{1e13P4}
&&{}_8F_7 \left( \left. 
{\displaystyle a,b,c,d,a-p+1,p+1,q+1,-n
\atop \displaystyle 1+a-b,1+a-c,1+a-d,p,a-p,q,w}\right| 
1\right)\\
&&=\frac{(2\lambda-a)_n(\lambda-a)_n(\alpha+1)_n}{(1+\lambda)_n(2\lambda-2a)_n(\alpha)_n}\nonumber\\
&&\times {}_{13}F_{12} \left( 
{\displaystyle \lambda,1+\frac{\lambda}{2},\frac{a}{2},\frac{a+1}{2},\lambda+b-a,\lambda+c-a,\lambda+d-a,
\atop 
\displaystyle \frac{\lambda}{2},\frac{2+2\lambda-a}{2},\frac{1+2\lambda-a}{2},1+a-b,1+a-c,1+a-d,}
\right.\nonumber\\
&& \left.\left.
{\displaystyle 1+a-w,\frac{\lambda}{2}-\gamma+1,\frac{\lambda}{2}+\gamma+1,
\frac{\lambda}{2}-\delta+1,\frac{\lambda}{2}+\delta+1,-n
\atop 
\displaystyle \lambda+w-a,\frac{\lambda}{2}+\gamma,\frac{\lambda}{2}-\gamma,
\frac{\lambda}{2}+\delta,\frac{\lambda}{2}-\delta,1+\lambda+n}
\right| 
1\right),\nonumber
\end{eqnarray} 
where
\begin{equation}
\label{2e13P4}
\lambda=2a-b-c-d,
\end{equation}
\begin{equation}
\label{3e13P4}
w=1+2a-2\lambda-n, 
\end{equation}
\begin{equation}
\label{4e13P4}
\alpha=\frac{q(2\lambda-a)}{2q-a},
\end{equation}
\begin{equation}
\label{5e13P4}
\gamma^2
=\frac{\lambda^2}{4}
-\frac{p(a-p)(a-b-c)(a-b-d)(a-c-d)}
{bcd+p(a-p)(a-b-c-d)}
\end{equation}
and
\begin{equation}
\label{6e13P4}
\delta^2
=\frac{\lambda^2}{4}
-\frac{q(2\lambda-a)+n(2q-a)}
{2}.
\end{equation}
\end{Proposition}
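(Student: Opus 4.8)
The plan is to apply Proposition~\ref{13P1}, i.e.\ formula (\ref{2e13P1}), with $r=1$, $s=2$, single upper parameter $a_1=q+1$, lower parameters $b_1=q$, $b_2=w$, and $x=1$. With these choices the left-hand side of (\ref{1e13P4}) is exactly the ${}_{r+7}F_{s+5}={}_8F_7$ series on the left of (\ref{2e13P1}), so it is rewritten as a single sum over $m$ whose inner factor is
\begin{equation*}
{}_4F_3 \left( \left.
{\displaystyle a+2m,\,a-\lambda,\,q+1+m,\,-n+m
\atop \displaystyle 1+\lambda+2m,\,q+m,\,w+m}\right|
1\right).
\end{equation*}
The first thing to check is that under the constraint (\ref{3e13P4}), namely $w=1+2a-2\lambda-n$, this inner series is Saalsch\"utzian with upper parameter $q+1+m$ exceeding lower parameter $q+m$ by one, matching the Rakha--Rathie form (\ref{e1R1C1P18}) under $a\mapsto a+2m$, $b\mapsto a-\lambda$, $p\mapsto q+m$, $c\mapsto 1+\lambda+2m$, $n\mapsto n-m$; indeed the third lower parameter $2+(a+2m)+(a-\lambda)-(1+\lambda+2m)-(n-m)$ equals $w+m$ precisely when (\ref{3e13P4}) holds.

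Next I would sum the inner series by (\ref{e1R1C1P18}). The crucial simplification is that the auxiliary parameter it produces,
\begin{equation*}
Q=\frac{(q+m)(\lambda-a)(2\lambda-a+2m)}{(a+2m)(a-\lambda)+(q+m)(2\lambda-2a)},
\end{equation*}
collapses because its denominator factors as $(a-\lambda)(a-2q)$, so the $m$ there cancels and $Q=(q+m)(2\lambda-a+2m)/(2q-a)$; at $m=0$ this is exactly $\alpha$ from (\ref{4e13P4}). Writing $(Q+1)_{n-m}/(Q)_{n-m}=(Q+n-m)/Q$, the Rakha--Rathie sum contributes
\begin{equation*}
\frac{(\lambda-a)_{n-m}\,(2\lambda-a+2m)_{n-m}}{(1+\lambda+2m)_{n-m}\,(2\lambda-2a)_{n-m}}\cdot\frac{Q+n-m}{Q}.
\end{equation*}

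The third and most delicate step is to reduce the resulting single sum over $m$ to the very-well-poised ${}_{13}F_{12}(1)$ on the right of (\ref{1e13P4}). Here I would apply the reversal $(x)_{n-m}=(-1)^m(x)_n/(1-x-n)_m$ to $(\lambda-a)_{n-m}$ and $(2\lambda-2a)_{n-m}$, and the splitting $(y+2m)_{n-m}=(y)_n(y+n)_m\big/\bigl(4^m(\tfrac{y}{2})_m(\tfrac{y+1}{2})_m\bigr)$ (the Pochhammer product rule together with the duplication formula) to $(2\lambda-a+2m)_{n-m}$ and $(1+\lambda+2m)_{n-m}$. The two factors of $(-1)^m$ and of $4^{\pm m}$ cancel, and the extracted $(\,\cdot\,)_n$ terms assemble into $(2\lambda-a)_n(\lambda-a)_n\big/\bigl((1+\lambda)_n(2\lambda-2a)_n\bigr)$, the rational prefactor of (\ref{1e13P4}). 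Completing the square, $(Q+n-m)/Q$ rewrites as $2\bigl[(\tfrac{\lambda}{2}+m)^2-\delta^2\bigr]\big/\bigl((q+m)(2\lambda-a+2m)\bigr)$ with $\delta$ exactly as in (\ref{6e13P4}); then $(\tfrac{\lambda}{2}+m)^2-\delta^2=(\tfrac{\lambda}{2}+\delta+m)(\tfrac{\lambda}{2}-\delta+m)$ supplies the new very-well-poised pair $\tfrac{\lambda}{2}\pm\delta$, while the very-well-poised $\gamma$-pair is inherited unchanged from the prefactor of (\ref{2e13P1}).

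Finally, the two stray linear factors in that denominator are precisely what is needed: $(q+m)$ cancels against the $(q+1)_m/(q)_m=(q+m)/q$ coming from $a_1=q+1,\,b_1=q$, and $(2\lambda-a+2m)$ shifts $(\tfrac{2\lambda-a}{2})_m$ into $(\tfrac{2+2\lambda-a}{2})_m$. Using (\ref{3e13P4}), the reversed factor from $(2\lambda-2a)_{n-m}$ yields $(1+2a-2\lambda-n)_m=(w)_m$, cancelling the leftover $(w)_m$ from $b_2=w$, while $1+a-w=2\lambda-a+n$ and $\lambda+w-a=1+a-\lambda-n$ convert the surviving $m$-factors into the target parameters $1+a-w$ and $\lambda+w-a$. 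Collecting the residual constant $\tfrac12\bigl(q(2\lambda-a)+n(2q-a)\bigr)\cdot 2\big/\bigl(q(2\lambda-a)\bigr)=(\alpha+n)/\alpha=(\alpha+1)_n/(\alpha)_n$ completes the prefactor. The main obstacle is exactly this bookkeeping of the two stray factors $(q+m)$ and $(2\lambda-a+2m)$: one must verify that they cancel and shift as claimed and that completing the square identifies $\delta$ correctly, so that all thirteen upper and twelve lower parameters land in place.
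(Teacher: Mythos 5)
Your proposal is correct and follows exactly the paper's route: specializing Proposition \ref{13P1} with $a_1=q+1$, $b_1=q$, $b_2=w$, $x=1$, recognizing the inner ${}_4F_3(1)$ as Saalsch\"utzian under (\ref{3e13P4}), summing it by the Rakha--Rathie formula (\ref{e1R1C1P18}), and simplifying. The paper leaves the simplification implicit, whereas you carry out the bookkeeping (the collapse of the auxiliary parameter $Q$, the completion of the square yielding $\delta$, and the cancellation of the stray factors) explicitly and correctly.
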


\begin{proof}
Use $q+1,q,w,1$ for $a_1,b_1,b_2,x$, respectively, in
(\ref{2e13P1}) 
(where $w$ is as given in (\ref{3e13P4}))
and then apply
(\ref{e1R1C1P18}) to sum
the  Saalsch\"utzian
${}_4F_3(1)$ series on the right-hand side.
The final result follows after some simplification.
\end{proof}

We note that
the terminating ${}_8F_7(1)$ series on the left-hand side
of (\ref{1e13P4}) is Saalsch\"utzian (i.e. with parametric excess 
$\omega=1$) and deviates from a well-poised series
in two pairs of numerator and denominator parameters.
The terminating ${}_{13}F_{12}(1)$ series on the right-hand side
of (\ref{1e13P4}) is very-well-poised.

The special case of Proposition \ref{13P4}
given in the next corollary 
is a direct extension of
the results
found in \cite[Eqs.\ 4.5.3, 4.5.4 and 4.5.5]{Ba}:

\begin{Corollary}
\label{1C13P4}
We have
\begin{eqnarray}
\label{1e1C13P4}
&&{}_6F_5 \left( \left. 
{\displaystyle a,b,c,d,q+1,-n
\atop \displaystyle 1+a-b,1+a-c,1+a-d,q,w}\right| 
1\right)\\
&&=\frac{(2\lambda-a)_n(\lambda-a)_n(\alpha+1)_n}{(1+\lambda)_n(2\lambda-2a)_n(\alpha)_n}\nonumber\\
&&\times {}_{11}F_{10} \left( 
{\displaystyle \lambda,1+\frac{\lambda}{2},\frac{a}{2},\frac{a+1}{2},\lambda+b-a,\lambda+c-a,\lambda+d-a,
\atop 
\displaystyle \frac{\lambda}{2},\frac{2+2\lambda-a}{2},\frac{1+2\lambda-a}{2},1+a-b,1+a-c,1+a-d,}
\right.\nonumber\\
&& \left.\left.
{\displaystyle 1+a-w,
\frac{\lambda}{2}-\delta+1,\frac{\lambda}{2}+\delta+1,-n
\atop 
\displaystyle \lambda+w-a,
\frac{\lambda}{2}+\delta,\frac{\lambda}{2}-\delta,1+\lambda+n}
\right| 
1\right),\nonumber
\end{eqnarray} 
where
\begin{equation}
\label{2e1C13P4}
\lambda=1+2a-b-c-d,
\end{equation}
\begin{equation}
\label{3e1C13P4}
w=1+2a-2\lambda-n, 
\end{equation}
\begin{equation}
\label{4e1C13P4}
\alpha=\frac{q(2\lambda-a)}{2q-a}
\end{equation}
and
\begin{equation}
\label{5e1C13P4}
\delta^2
=\frac{\lambda^2}{4}
-\frac{q(2\lambda-a)+n(2q-a)}
{2}.
\end{equation}
\end{Corollary}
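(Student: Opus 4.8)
The plan is to derive Corollary \ref{1C13P4} from Proposition \ref{13P4} by the single specialization $p=b$, followed by the relabeling that replaces $b+1$ with $b$, in exact parallel to the proof of Corollary \ref{1C12P2}. The value $p=b$ is precisely the one at which the free parameter $p$ merges with the well-poised $b$-structure, collapsing one numerator/denominator pair on each side of (\ref{1e13P4}) and thereby dropping the order of each series by one.

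On the left-hand side, setting $p=b$ turns the ${}_8F_7$ numerator entries $a-p+1,p+1$ into $1+a-b,\,b+1$ and the denominator entries $p,a-p$ into $b,\,a-b$. The numerator $1+a-b$ then cancels the already-present denominator $1+a-b$, and the numerator $b$ cancels the denominator $b$, leaving the single surviving pair $(b+1,\,a-b)$. After replacing $b+1$ by $b$ this becomes $(b,\,1+a-b)$, so the ${}_8F_7$ reduces to exactly the ${}_6F_5$ on the left of (\ref{1e1C13P4}).

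The one genuinely computational step is on the right-hand side, where I must check that at $p=b$ the discriminant (\ref{5e13P4}) degenerates to a perfect square. Indeed the denominator there factors as $bcd+b(a-b)(a-b-c-d)=b(a-b-c)(a-b-d)$, so after cancellation $\gamma^2=\frac{\lambda^2}{4}-(a-b)(a-c-d)=\bigl(\frac{\lambda}{2}+b-a\bigr)^2$, using $a-c-d=\lambda+b-a$ (as $\lambda=2a-b-c-d$). Taking $\gamma=\frac{\lambda}{2}+b-a$ (the sign is immaterial, since the $\gamma$-pair is symmetric under $\gamma\mapsto-\gamma$), the four $\gamma$-entries of the ${}_{13}F_{12}$ become $1+a-b,\ \lambda+b-a+1$ in the numerator and $\lambda+b-a,\ a-b$ in the denominator. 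The numerator $1+a-b$ cancels the denominator $1+a-b$, and the already-present numerator $\lambda+b-a$ cancels the denominator $\lambda+b-a$, leaving only the pair $(\lambda+b-a+1,\,a-b)$. Replacing $b+1$ by $b$ converts this into $(\lambda+b-a,\,1+a-b)$, which is exactly the well-poised $b$-pair of the very-well-poised ${}_{11}F_{10}$ on the right of (\ref{1e1C13P4}).

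Finally I would observe that every remaining ingredient is inert under this process: the prefactor, the quantities $\alpha$ and $\delta^2$, and the balancing constraint $w=1+2a-2\lambda-n$ in (\ref{4e13P4})--(\ref{6e13P4}) do not involve $p$, while $\lambda$ is unchanged as a number and equals $1+2a-b-c-d$ once written in the relabeled variable, matching (\ref{2e1C13P4})--(\ref{5e1C13P4}). The only real obstacle is verifying the factorization $bcd+b(a-b)(a-b-c-d)=b(a-b-c)(a-b-d)$ that forces $\gamma^2$ to be a perfect square; once that identity is established, the two cancellations on each side and the concluding shift $b+1\mapsto b$ are entirely routine.
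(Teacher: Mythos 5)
Your proposal is correct and follows exactly the paper's own proof, which is simply ``Let $p=b$ in (\ref{1e13P4}) and then replace $b+1$ with $b$''; you have merely supplied the routine verifications (the factorization $bcd+b(a-b)(a-b-c-d)=b(a-b-c)(a-b-d)$ forcing $\gamma^2=\bigl(\tfrac{\lambda}{2}+b-a\bigr)^2$, and the pairwise cancellations on each side) that the paper leaves implicit. All of those checks are accurate.
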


\begin{proof}
Let $p=b$ in (\ref{1e13P4}) and then 
replace $b+1$ with $b$.
\end{proof}

Equation (\ref{1e1C13P4}) above expresses a certain 
terminating Saalsch\"utzian (i.e. with parametric excess $\omega=1$)
${}_6F_5(1)$
series that deviates from a well-poised series in two pairs 
of numerator and denominator parameters
in terms of a terminating very-well-poised 
${}_9F_8(1)$ series.
This equation is a direct generalization of
the classical results found by Bailey in 
\cite[Eqs.\ 8.1, 8.2 and 8.3]{Bailey3}
and reproduced in
\cite[Eqs.\ 4.5.3, 4.5.4 and 4.5.5]{Ba}
that transform terminating nearly-poised ${}_5F_4(1)$ series
with parametric excesses $\omega=1$ and $\omega=2$ and
a terminating very-well-poised ${}_6F_5(1)$ series 
with parametric excess $\omega=1$ in terms of
terminating very-well-poised ${}_9F_8(1)$ series. 
Indeed, we have the following:

\begin{enumerate}[label=(\alph*)]

\item
Letting $q \to -n$ in (\ref{1e1C13P4}) gives
\cite[Eq.\ 4.5.3]{Ba}.

\item
Letting $q \to \frac{a}{2}$ in (\ref{1e1C13P4}) gives
\cite[Eq.\ 4.5.4]{Ba}.

\item
Letting $q \to \infty$ in (\ref{1e1C13P4}) gives
\cite[Eq.\ 4.5.5]{Ba}.

\end{enumerate} 

\section{Extensions of classical quadratic transformations}

In this last section, we show how by taking certain limits
 of the relations in Section 3, we can obtain generalizations
of some classical quadratic transformations of hypergeometric functions.
Two of the classical quadratic transformations are the following:
\begin{eqnarray}
\label{1e6}
&&{}_3F_2 \left( \left. 
{\displaystyle a,b,c
\atop \displaystyle 1+a-b,1+a-c}\right| 
x\right)\\
&&=
(1-x)^{-a}
{}_3F_2 \left( \left. 
{\displaystyle \frac{a}{2},\frac{a+1}{2},1+a-b-c
\atop \displaystyle 1+a-b,1+a-c}\right| 
-\frac{4x}{(1-x)^2}\right)\nonumber 
\end{eqnarray}  
and
\begin{eqnarray}
\label{2e6}
&&{}_4F_3 \left( \left. 
{\displaystyle a,1+\frac{a}{2},b,c
\atop \displaystyle \frac{a}{2},1+a-b,1+a-c}\right| 
x\right)\\
&&=
(1+x)(1-x)^{-a-1}
{}_3F_2 \left( \left. 
{\displaystyle \frac{a+1}{2},\frac{a+2}{2},1+a-b-c
\atop \displaystyle 1+a-b,1+a-c}\right| 
-\frac{4x}{(1-x)^2}\right).\nonumber 
\end{eqnarray}  

The transformation (\ref{1e6}) is due to Whipple \cite{Whipple5}
and (\ref{2e6}) is due to Bailey \cite{Bailey3}.
The transformation (\ref{2e6}) is sometimes referred to as the 
companion transformation
of (\ref{1e6}) (see \cite[Section 4]{GS}, for example).

In this section, 
we shall obtain quadratic transformations that extend both (\ref{1e6}) and its companion
(\ref{2e6}) in terms of 
single transformations (see (\ref{1e6P1}) and (\ref{1e2C6P1}) below).
We begin with our most general extension which follows as 
a consequence of Proposition \ref{3P16} from Section 3. 

\begin{Proposition}
\label{6P1}
The following quadratic transformation holds:
\begin{eqnarray}
\label{1e6P1}
&&{}_6F_5 \left( \left. 
{\displaystyle a,b,c,a-p+1,p+1,q+1
\atop \displaystyle 1+a-b,1+a-c,p,a-p,q}\right| 
x\right)\\
&&=\left(1+\left(\frac{a-q}{q}\right)x\right)
(1-x)^{-a-1}\nonumber\\
&&\times
{}_5F_4 \left( \left. 
{\displaystyle \frac{a}{2},\frac{a+1}{2},a-b-c,\gamma+1,\delta+1
\atop \displaystyle 1+a-b,1+a-c,\gamma,\delta}\right| 
-\frac{4x}{(1-x)^2}\right),\nonumber 
\end{eqnarray}  
where
\begin{equation}
\label{2e6P1}
\gamma=\frac{p(a-p)(b+c-a)}{bc-p(a-p)},
\end{equation}
and
\begin{equation}
\label{3e6P1}
\delta=\frac{q+(a-q)x}{1+x}.
\end{equation}
\end{Proposition}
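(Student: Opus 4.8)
The plan is to obtain (\ref{1e6P1}) as a confluent limit of Proposition \ref{3P16}, the standard device for extracting a quadratic transformation of argument $x$ from a terminating identity of unit argument. Concretely, in (\ref{1e3P16}) I would keep $a,b,c,p,q$ fixed and couple the two ``terminating'' data by setting $w=-n/x$, then let $n\to\infty$ with $x$ fixed (and, at first, small). The reason this coupling works is that in the $k$-th term of the ${}_7F_6(1)$ on the left of (\ref{1e3P16}) the only factors that are not already free parameters are $(-n)_k/(w)_k$, and with $w=-n/x$ one has $(-n)_k/(w)_k\to x^k$ as $n\to\infty$; hence the left-hand series converges termwise to the ${}_6F_5(x)$ on the left of (\ref{1e6P1}).

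For the right-hand side I would analyze the prefactor and the ${}_7F_6(1)$ separately. Writing $(\alpha+1)_n/(\alpha)_n=1+n/\alpha$ and using $\alpha=q(1+a-w)/(a-q)\sim qn/\big(x(a-q)\big)$, this ratio tends to $1+\frac{a-q}{q}x$, the first factor in (\ref{1e6P1}). For the remaining prefactor I would use the Gamma-quotient asymptotic $\Gamma(z+\beta)/\Gamma(z+\alpha)\sim z^{\beta-\alpha}$ twice to get $(w-a-1)_n/(w)_n\sim\big(w/(w+n)\big)^{a+1}$, and then substitute $w=-n/x$, $w+n=n(x-1)/x$, so that $w/(w+n)=1/(1-x)$ and the factor becomes $(1-x)^{-a-1}$.

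The ${}_7F_6(1)$ on the right of (\ref{1e3P16}) I would treat termwise as well. The parameters $\tfrac a2,\tfrac{a+1}2,a-b-c,1+a-b,1+a-c,\gamma$ are untouched, while a short computation gives $\beta=\frac{q(1+a-w)+n(a-q)}{1+2q-w+n}\to\frac{q+(a-q)x}{1+x}=\delta$, the value in (\ref{3e6P1}). For the $n$- and $w$-dependent factors I would extract leading orders: $(1+a-w)_m\sim(n/x)^m$, $(-n)_m\sim(-n)^m$, and $\big(\tfrac{2+a-w-n}2\big)_m,\big(\tfrac{3+a-w-n}2\big)_m\sim\big(\tfrac{n(1-x)}{2x}\big)^m$; their combination collapses to $\big(-4x/(1-x)^2\big)^m$, turning the $m$-th term into precisely the $m$-th term of the ${}_5F_4$ in (\ref{1e6P1}) with argument $-4x/(1-x)^2$. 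Together with $\beta\to\delta$ and $\gamma$ fixed, this identifies the full right-hand side.

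The main obstacle is analytic rather than algebraic: the two series, terminating for each finite $n$, become nonterminating in the limit, so I must justify interchanging $\lim_{n\to\infty}$ with the growing summation. I would do this via Tannery's theorem, exhibiting an $n$-independent summable majorant for the general terms; this is where the restriction to small $|x|$ is used, ensuring $|x|<1$ on the left and $\big|4x/(1-x)^2\big|<1$ on the right, together with the absolute-convergence conditions recorded in Section 2. Once the identity holds on a small real $x$-interval, both sides are analytic in $x$ on a common domain, so it extends by analytic continuation. A minor point to verify along the way is that $w=-n/x$ keeps all denominator parameters admissible (no nonpositive integers) for generic $x$, which fails only on a discrete set.
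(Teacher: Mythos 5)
Your proposal is correct and follows exactly the paper's own route: the paper's proof of Proposition \ref{6P1} is precisely ``let $w=-n/x$ in (\ref{1e3P16}) and then let $n\to\infty$,'' and your termwise limits (including $(-n)_k/(w)_k\to x^k$, the prefactor asymptotics giving $\bigl(1+\tfrac{a-q}{q}x\bigr)(1-x)^{-a-1}$, $\beta\to\delta$, and the collapse of the $n$-, $w$-dependent factors to $\bigl(-4x/(1-x)^2\bigr)^m$) all check out. The only difference is that you supply the Tannery/analytic-continuation justification that the paper leaves implicit, which is a welcome addition rather than a deviation.
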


\begin{proof}
Let $w=-n/x$ in (\ref{1e3P16})
and then let $n \to \infty$.
\end{proof}

The quadratic transformation (\ref{1e6P1}) above is a very general one.
It extends both (\ref{1e6}) and (\ref{2e6}) as we show after Corollary \ref{2C6P1}
below. Furthermore, the presence of the variable $x$ 
as part of $\delta$ that appears
in the numerator and denominator
parameters in the ${}_5F_4$ series on the right-hand side of (\ref{1e6P1}) seems to be a new feature
and distinguishes this transformation from most other known transformations.

Two interesting quadratic transformations that follow as
special cases of (\ref{1e6P1}) are derived below.

\begin{Corollary}
\label{1C6P1}
The following quadratic transformation holds:
\begin{eqnarray}
\label{1e1C6P1}
&&{}_5F_4 \left( \left. 
{\displaystyle a,b,c,a-p+1,p+1
\atop \displaystyle 1+a-b,1+a-c,p,a-p}\right| 
x\right)\\
&&=(1-x)^{-a}\nonumber\\
&&\times
{}_4F_3 \left( \left. 
{\displaystyle \frac{a}{2},\frac{a+1}{2},a-b-c,\gamma+1
\atop \displaystyle 1+a-b,1+a-c,\gamma}\right| 
-\frac{4x}{(1-x)^2}\right),\nonumber 
\end{eqnarray}  
where
\begin{equation}
\label{2e1C6P1}
\gamma=\frac{p(a-p)(b+c-a)}{bc-p(a-p)}.
\end{equation}
\end{Corollary}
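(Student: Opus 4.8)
The plan is to derive Corollary \ref{1C6P1} from the general quadratic transformation in Proposition \ref{6P1} by taking the limit $q \to \infty$, exactly mirroring the way Corollary \ref{1C3P16} was obtained from Proposition \ref{3P16} by letting $q\to\infty$ in (\ref{1e3P16}). The idea is that the parameter pair $q+1,q$ contributes a factor $(q+1)_m/(q)_m \to 1$ as $q\to\infty$ inside any hypergeometric term, so sending $q\to\infty$ collapses the ${}_6F_5$ on the left of (\ref{1e6P1}) to the ${}_5F_4$ of (\ref{1e1C6P1}), and simultaneously removes the $\delta$-pair on the right, collapsing the ${}_5F_4$ there to a ${}_4F_3$.

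First I would examine the left-hand side of (\ref{1e6P1}). The extra numerator parameter $q+1$ and denominator parameter $q$ produce, in the $m$-th series coefficient, the quotient $(q+1)_m/(q)_m = (q+m)/q \to 1$ as $q\to\infty$; hence the ${}_6F_5$ in (\ref{1e6P1}) tends termwise to the ${}_5F_4$ in (\ref{1e1C6P1}). Next I would track the prefactor on the right. From (\ref{3e6P1}) we have $\delta=(q+(a-q)x)/(1+x)$, which behaves like $\delta \sim q(1-x)/(1+x) \to \infty$ as $q\to\infty$; and the scalar prefactor $1+((a-q)/q)x \to 1-x$ as $q\to\infty$. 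Therefore the product $(1+((a-q)/q)x)(1-x)^{-a-1} \to (1-x)\cdot(1-x)^{-a-1}=(1-x)^{-a}$, which is precisely the prefactor appearing in (\ref{1e1C6P1}).

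Finally I would handle the ${}_5F_4$ series on the right of (\ref{1e6P1}). Since $\delta\to\infty$, the pair $\delta+1,\delta$ contributes $(\delta+1)_m/(\delta)_m=(\delta+m)/\delta \to 1$ in each coefficient, so that the ${}_5F_4$ with parameters $\frac{a}{2},\frac{a+1}{2},a-b-c,\gamma+1,\delta+1$ over $1+a-b,1+a-c,\gamma,\delta$ tends to the ${}_4F_3$ with parameters $\frac{a}{2},\frac{a+1}{2},a-b-c,\gamma+1$ over $1+a-b,1+a-c,\gamma$, evaluated at the same argument $-4x/(1-x)^2$. The definition of $\gamma$ in (\ref{2e6P1}) is independent of $q$ and carries over verbatim to (\ref{2e1C6P1}). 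Assembling these three limits yields (\ref{1e1C6P1}).

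The main technical point to justify — the only step that is more than routine bookkeeping — is the interchange of the limit $q\to\infty$ with the (infinite, in the nonterminating case) summation defining each hypergeometric series. I expect this to be controlled by dominated convergence: for $|x|$ small enough that both series converge absolutely, the quotients $(q+1)_m/(q)_m$ and $(\delta+1)_m/(\delta)_m$ are, for all large $q$, uniformly bounded in $m$ by a constant (each factor $(q+m)/q$ and $(\delta+m)/\delta$ stays within a fixed multiple of the corresponding coefficient once $q$ is large), so the series terms are dominated by a summable sequence independent of $q$. Passing to the limit termwise then gives the stated identity on a neighborhood of $x=0$, and the result extends by analytic continuation in $x$.
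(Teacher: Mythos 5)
Your proposal is correct and takes exactly the paper's route: the paper's proof of Corollary \ref{1C6P1} is the single instruction ``Let $q \to \infty$ in (\ref{1e6P1}),'' and your computation of the three limits (the collapse of the $q$-pair on the left, the prefactor $1+((a-q)/q)x \to 1-x$ combining with $(1-x)^{-a-1}$, and $\delta \to \infty$ removing the $\delta$-pair on the right) is precisely the bookkeeping that instruction entails. Your added remarks on justifying the termwise limit are a reasonable supplement but not something the paper addresses.
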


\begin{proof}
Let $q \to \infty$ in (\ref{1e6P1}).
\end{proof}

We note that the transformation (\ref{1e1C6P1}) is the same as the special case
$k=1$ in \cite[Theorem $3.4$]{Maier}.

\begin{Corollary}
\label{2C6P1}
The following quadratic transformation holds:
\begin{eqnarray}
\label{1e2C6P1}
&&{}_4F_3 \left( \left. 
{\displaystyle a,b,c,q+1
\atop \displaystyle 1+a-b,1+a-c,q}\right| 
x\right)\\
&&=\left(1+\left(\frac{a-q}{q}\right)x\right)
(1-x)^{-a-1}\nonumber\\
&&\times
{}_4F_3 \left( \left. 
{\displaystyle \frac{a}{2},\frac{a+1}{2},1+a-b-c,\delta+1
\atop \displaystyle 1+a-b,1+a-c,\delta}\right| 
-\frac{4x}{(1-x)^2}\right),\nonumber 
\end{eqnarray}  
where
\begin{equation}
\label{2e2C6P1}
\delta=\frac{q+(a-q)x}{1+x}.
\end{equation}
\end{Corollary}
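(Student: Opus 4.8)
The plan is to obtain (\ref{1e2C6P1}) from Proposition \ref{6P1} by letting $p \to \infty$ in (\ref{1e6P1}), in direct parallel to the derivation of Corollary \ref{1C6P1} via the limit $q \to \infty$. First I would treat the left-hand side: the two $p$-dependent pairs contribute the factor $(a-p+1)_k(p+1)_k/[(p)_k(a-p)_k]$ to the $k$-th term of the ${}_6F_5$ series, and since $(p+1)_k/(p)_k = (p+k)/p \to 1$ and $(a-p+1)_k/(a-p)_k = (a-p+k)/(a-p) \to 1$ as $p \to \infty$, this factor tends to $1$. Thus the left-hand side reduces termwise to the ${}_4F_3$ series on the left of (\ref{1e2C6P1}). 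Since neither the prefactor $\left(1+\left(\frac{a-q}{q}\right)x\right)(1-x)^{-a-1}$ nor the quantity $\delta$ of (\ref{3e6P1}) involves $p$, they pass through unchanged, yielding the prefactor of (\ref{1e2C6P1}) and the formula (\ref{2e2C6P1}) for $\delta$.

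The substance of the argument lies on the right-hand side, in the limiting behavior of $\gamma$. Dividing numerator and denominator of (\ref{2e6P1}) by $p^2$, I find $\gamma \to a-b-c$ as $p \to \infty$. The three $\gamma$-dependent factors in the ${}_5F_4$ series appear as $(a-b-c)_m(\gamma+1)_m/(\gamma)_m$, and using $(\gamma+1)_m/(\gamma)_m = (\gamma+m)/\gamma$ this product equals $(a-b-c)_m\,(\gamma+m)/\gamma$, which in the limit $\gamma \to a-b-c$ becomes $(a-b-c)_m\,(a-b-c+m)/(a-b-c) = (1+a-b-c)_m$. Hence the numerator parameter $a-b-c$ together with the pair $\gamma+1,\gamma$ coalesce into the single numerator parameter $1+a-b-c$, while the $\delta$-pair and all remaining parameters are untouched; consequently the ${}_5F_4$ on the right of (\ref{1e6P1}) becomes precisely the ${}_4F_3$ on the right of (\ref{1e2C6P1}), the argument $-4x/(1-x)^2$ being unaffected.

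The only point demanding care is this coalescence of three factors into $(1+a-b-c)_m$: I would justify passing the limit $p \to \infty$ inside the series termwise, each coefficient being a rational function of $p$ with a finite limit and the series argument lying in a region of convergence independent of $p$. Collecting both reductions then yields (\ref{1e2C6P1}) with $\delta$ as given in (\ref{2e2C6P1}), completing the proof.
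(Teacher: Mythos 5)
Your proposal is correct and follows exactly the paper's proof, which is simply to let $p \to \infty$ in (\ref{1e6P1}); your verification that $\gamma \to a-b-c$ and that $(a-b-c)_m(\gamma+1)_m/(\gamma)_m \to (1+a-b-c)_m$ correctly supplies the details the paper leaves implicit.
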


\begin{proof}
Let $p \to \infty$ in (\ref{1e6P1}).
\end{proof}

The quadratic transformation (\ref{1e2C6P1}) above is the direct
extension of (\ref{1e6}) and its companion
(\ref{2e6}). In fact, Whipple's transformation
(\ref{1e6}) follows from (\ref{1e2C6P1}) by letting
$q \to \infty$,
and Bailey's companion transformation (\ref{2e6}) follows 
from (\ref{1e2C6P1}) by letting
$q \to \frac{a}{2}$.

\begin{Corollary}
\label{3C6P1}
We have, if $q \neq a/2$,
\begin{eqnarray}
\label{1e3C6P1}
&&{}_6F_5 \left( \left. 
{\displaystyle a,b,c,a-p+1,p+1,q+1
\atop \displaystyle 1+a-b,1+a-c,p,a-p,q}\right| 
-1\right)\\
&&=\frac{(2q-a)2^{-a-1}}{q}\nonumber\\
&&\times
{}_4F_3 \left( \left. 
{\displaystyle \frac{a}{2},\frac{a+1}{2},a-b-c,\gamma+1
\atop \displaystyle 1+a-b,1+a-c,\gamma}\right| 
1\right),\nonumber 
\end{eqnarray}  
where
\begin{equation}
\label{2e3C6P1}
\gamma=\frac{p(a-p)(b+c-a)}{bc-p(a-p)}.
\end{equation}
\end{Corollary}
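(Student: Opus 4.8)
The plan is to obtain Corollary \ref{3C6P1} directly from Proposition \ref{6P1} by specializing the free variable to $x=-1$ in (\ref{1e6P1}). First I would record what each ingredient of (\ref{1e6P1}) becomes in this limit. The prefactor $\left(1+\left(\frac{a-q}{q}\right)x\right)(1-x)^{-a-1}$ evaluates at $x=-1$ to $\left(1-\frac{a-q}{q}\right)2^{-a-1}=\frac{2q-a}{q}\,2^{-a-1}$, which is exactly the constant in front of the ${}_4F_3$ in (\ref{1e3C6P1}). Likewise the argument $-\frac{4x}{(1-x)^2}$ of the right-hand series tends to $\frac{4}{4}=1$, so the transformed series is evaluated at unit argument, while the parameter $\gamma$ of (\ref{2e6P1}) does not involve $x$ and carries over unchanged.

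The one genuinely new phenomenon is the behaviour of $\delta$. From (\ref{3e6P1}), $\delta=\frac{q+(a-q)x}{1+x}$, whose denominator vanishes as $x\to-1$ while its numerator tends to $q-(a-q)=2q-a$. Hence, precisely under the hypothesis $q\neq a/2$, we have $2q-a\neq0$ and $\delta\to\infty$. The pair $\delta+1,\delta$ enters the ${}_5F_4$ on the right of (\ref{1e6P1}) only through the ratio $(\delta+1)_k/(\delta)_k=1+k/\delta$, which tends to $1$ for every fixed $k$ as $\delta\to\infty$. Consequently this pair drops out in the limit and the ${}_5F_4$ collapses to the ${}_4F_3$ with numerator parameters $\frac{a}{2},\frac{a+1}{2},a-b-c,\gamma+1$ and denominator parameters $1+a-b,1+a-c,\gamma$ that appear in (\ref{1e3C6P1}). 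Assembling the three computations yields the claimed identity. I note that when $q=a/2$ one instead finds $\delta=a/2$ for all $x$, so no reduction occurs; this is exactly why the corollary excludes that value.

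The step requiring the most care is the justification of the limit $x\to-1$ itself, since it is a joint limit in which the order of the series drops at the same time as the argument approaches the boundary point $1$. The ${}_5F_4$ in (\ref{1e6P1}) has negative parametric excess and would not converge at argument $1$ on its own, but the limiting ${}_4F_3$ does: a direct count of parameters gives parametric excess $\tfrac12>0$, so the right-hand side of (\ref{1e3C6P1}) is a convergent ${}_4F_3(1)$. To make the passage rigorous I would fix the termwise limits computed above and control the tail uniformly for $x$ near $-1$ (for instance by a dominated-convergence argument applied to the series in $-\frac{4x}{(1-x)^2}$, using $\tfrac12>0$ to bound the terms), together with an Abel-type continuity argument for the left-hand ${}_6F_5$ as $x\to-1^{+}$. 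With these estimates in place the formal substitution $x=-1$ is legitimate and the corollary follows.
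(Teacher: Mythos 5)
Your proposal is correct and follows exactly the paper's proof, which simply lets $x\to-1$ in (\ref{1e6P1}); your computations of the prefactor, the argument tending to $1$, and the collapse of the $\delta$-pair as $\delta\to\infty$ (valid precisely when $q\neq a/2$) are all accurate. The additional convergence discussion is a welcome refinement beyond the paper's one-line argument but does not change the route.
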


\begin{proof}
Let $x \to -1$ in (\ref{1e6P1}).
\end{proof}

The formula in (\ref{1e3C6P1}) above is a generalization of Whipple's formula
(see \cite{Whipple2,Whipple5})
\begin{eqnarray}
\label{3e6}
&&{}_3F_2 \left( \left. 
{\displaystyle a,b,c
\atop \displaystyle 1+a-b,1+a-c}\right| 
-1\right)\\
&&=2^{-a}
{}_3F_2 \left( \left. 
{\displaystyle \frac{a}{2},\frac{a+1}{2},1+a-b-c
\atop \displaystyle 1+a-b,1+a-c}\right| 
1\right).\nonumber 
\end{eqnarray}  
In fact, (\ref{3e6}) follows from (\ref{1e3C6P1}) by 
letting first $q \to \infty$ and then 
letting $p \to \infty$ in the resulting equation.

\end{document}